\newtheorem{lemma}{Lemma}[section]
\newtheorem{theorem}{Theorem}[section]
\newtheorem{proposition}[theorem]{Proposition}
\theoremstyle{definition}
\newtheorem{definition}{Definition}[section]
\newtheorem{corollary}[theorem]{Corollary}
\newtheorem{remark}{Remark}
\newtheorem{claim}{Claim}
\newcommand{\comment}[1]{}
\numberwithin{equation}{section}
\begin{document}
\title{Li-Yau inequality for unbounded Laplacian on graphs}
\author{Chao Gong, Yong Lin, Shuang Liu, Shing-Tung Yau}
\date{}
\maketitle

\begin{center}
\textbf{Abstract}
\end{center}
In this paper, we derive Li-Yau inequality for unbounded Laplacian on complete weighted graphs with the assumption of the curvature-dimension inequality $CDE'(n,K)$, which can be regarded as a notion of curvature on graphs. Furthermore, we obtain some applications of Li-Yau inequality, including Harnack inequality, heat kernel bounds and Cheng's eigenvalue estimate. These are first kind of results on this direction for unbounded Laplacian on graphs.
\section{Introduction}
Li-Yau inequality is a powerful tool for studying positive solutions to the heat equation on manifolds. The simplest version of Li-Yau inequality states
\begin{equation}\label{eq:ly0}
\frac{|\nabla u|^2}{u^2} - \frac{\partial_t u}{u} \leq \frac{n}{2t},~~t>0
\end{equation}
where $u$ is a positive solution of the heat equation \mbox{$(\Delta - \partial_t)u = 0$} on an $n$-dimensional compact Riemannian manifold with non-negative Ricci curvature, see \cite{LY86}. After Li and Yau's breakthrough in 1986, great efforts were made to establish an analogue result on different settings. In 2006, Bakry and Ledoux generalized Li-Yau inequality to measure spaces for diffusion Laplace operator that satisfy chain rule by using the curvature-dimension
inequality ($CD$ condition). It is obvious that chain rule for Laplace operator fails on graphs.

However, on graphs, in 2015, Bauer et.al. proved a discrete vision of Li-Yau inequality which is very similar to the original one \eqref{eq:ly0}, as follows: let $u$ be a positive solution to the heat equation on the graph $G$, then
\begin{equation}\label{eq:ly1}
\frac{\Gamma(\sqrt{u})}{u}-\frac{\partial_t \sqrt{u}}{\sqrt{u}}\leq\frac{n}{2t},~~t>0.
\end{equation}

This version of gradient estimate is sharp on graph setting(see \cite{BHLLMY15}).

In order to bypass the chain rule, the authors modified the curvature-dimension condition, which is called exponential curvature-dimension inequality ($CDE'(n,0)$ condition), see \cite{BHLLMY15}. After that, using the heat semigroup technique, Horn et.al. proved the same Li-Yau inequality as \eqref{eq:ly1} of bounded and positive solution of heat equation on graphs, see \cite{HLLY17}. Li-Yau inequality and its applications on
graphs has been studied by many authors, we refer to \cite{BHY14,M14,LL16,Q17}. All these results are established for bounded Laplacian. However, we know that the classic results of Li-Yau on Riemannian manifolds were for the unbounded Laplace-Beltrami operator.

Studies on unbounded Laplacian seem much more difficult on graphs and the proofs are usually different with the bounded cases. There are some results involving the curvature-dimension inequality for unbounded Laplacian. In \cite{HL17}, the authors derived stochastic completeness of complete graphs by proving an equivalent property of $CD(\infty, 0)$ condition, named gradient estimate. In \cite{LMP16}, the authors obtained diameter bound by utilizing gradient estimate on non-negative curved graphs.
Other equivalent properties of $CD(n,K)$, such as Poincar\'{e} inequality and reverse Poincar\'{e} inequality condition,  were proved in \cite{GL17}. After that, in \cite{H}, the author derived the Liouville theorem for bounded Harmonic functions on non-negative curved graphs using reverse Poincar\'{e} inequality. However, there is no result about Li-Yau inequality for unbounded Laplacian.

In this paper, we study unbounded Laplacian, and prove Li-Yau inequality for unbounded Laplacian on infinite graphs under the exponential curvature-dimension
condition. Moreover we obtain some applications, including Harnack inequality, heat kernel upper bounds and Cheng's eigenvalue estimate.

Let us introduce the setting and then state our main results.

\subsection{Setting}
Let $G=(V,E)$ be an infinite graph with the set of vertices $V$ and the set of edges $E$, a symmetric subsets of $V\times V$. Two vertices are called neighbours if they are connected by an edge $\{x,y\}\in E$, which is denoted by $x\sim y$. At a vertex $x$, if $\{x, x\}\in E$, we say there is a
loop at $x$. In this paper, we do allow loops for graphs. We say the graph is connected if for any distinct $x,y\in V$ there is a finite path such that $x=x_0\sim x_1\sim\cdots \sim x_n=y$. In this paper, we just consider connected graphs.

On $(V,E)$, we assign a measure on vertices by a function $m:V\rightarrow \mathbb{R}^+$,
and give a weight on edges by a function $\omega: E\rightarrow \mathbb{R}^+$, the edge $\{x,y\}\in E$ has weight $\omega_{xy}>0$, and the weight function is symmetric, i.e. $\omega_{xy}=\omega_{yx}$. We call the quadruple $G=(V,E,m,\omega)$ a weighted graph. In this paper, we restrict our interest to the locally finite graph, that is
\[\deg(x):=\sum_{y\sim x}\omega_{xy}<\infty,~~~~\forall x\in V.\]

We denote by $V^\mathbb{R}$ the set of real-value functions on $V$, and
by $C_0(V)$ the set of finitely supported functions on $V$. we denote by
$\ell^p_m, p\in [1,\infty]$ the $\ell^p_m$ spaces of functions on $V$ with respect to the measure $m$, and by $\|\cdot\|_{\ell^p_m}$ the $p$-norm of a function. For any $f,g\in \ell^2_m$, we let $\langle f,g\rangle=\sum_{x\in V}f(x)g(x)m(x)$ denote the standard inner product. This makes $\ell^2_m$ a Hilbert space.

To a weighted graph $G$, it associates with a Dirichlet form w.r.t $\ell_m^2$,
\[\begin{split}
Q:D(Q)\times D(Q)&\rightarrow \mathbb{R}\\
f&\mapsto Q(f):=\frac{1}{2}\sum_{x,y\in V}\omega_{xy}(f(y)-f(x))(g(y)-g(x)),
\end{split}\]
where $D(Q)$ is defined as the completion of $C_0(V)$ under
the $Q$-norm $\|\cdot\|_Q$ given by
\[\|f\|_Q=\sqrt{\|f\|_{\ell^2_m}^2+\frac{1}{2}\sum_{x,y\in V}\omega_{xy}(f(y)-f(x))^2}.\]
we refer from \cite{KL12}. For locally finite graphs, the associated generator $\Delta$, called Laplacian defined by
\[\Delta f(x)=\frac{1}{m(x)}\sum_{y\sim x}\omega_{xy}(f(y)-f(x)), ~~~~f\in D(\Delta),\]
where $D(\Delta)=\{f\in D(Q)|\Delta f\in \ell_m^2\}$. In this paper, we restrict all functions on
\[\mathcal{F}:=\{f\in V^\mathbb{R}:\sum_{y\sim x}\omega_{xy}|f(y)|<\infty~\mbox{for all}~x\in V\}.\]
The Laplacian $\Delta$ generates a semigroup $P_t f=e^{t\Delta}f$ acting on $\ell^p_m$ for $p\in[1,\infty]$. Obviously, the measure $m$ plays an important role in the definition of Laplacian. Given the weight $\omega$ on $E$, there are two typical choices of Laplacian as follows:
\begin{itemize}
  \item $m(x)=\deg(x)$ for all $x\in V$, which is called the normalized graph Laplacian;
  \item $m(x)\equiv 1$ for all $x\in V$, which is the combinatorial graph Laplacian.
\end{itemize}
Note that normalized Laplacian is bounded. Actually the fact that the Laplacian $\Delta$ is bounded on $\ell^p_m$ is equivalent to the following condition:
\begin{equation}\label{eq:bounded}
\sup_{x\in V}\frac{\deg(x)}{m(x)}<\infty,
\end{equation}
see \cite{KL12}. As we mentioned before, Li-Yau inequality and its applications were well studied for bounded Laplacian. Thus, in this paper, we are interested in combinatorial graph and more general unbounded Laplacians.

In this paper, we further assume the measure $m$ on $V$ is \textit{non-degenerate}, i.e.
\begin{equation}\label{eq:non-degenerate}
\inf\limits_{x\in V}m(x)=\delta>0.
\end{equation}
This assumption yields a very useful fact for $\ell^p_m$
spaces, see \cite{HL17}.
\begin{lemma}\label{lemma1.1}
Let $m$ be a non-degenerate measure on $V$ as \eqref{eq:non-degenerate}. Then
for any $1 \leq p < q \leq \infty, \ell^p_m\hookrightarrow \ell^q_m.$
\end{lemma}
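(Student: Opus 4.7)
The plan is to prove the continuous embedding by exhibiting an explicit constant $C = C(\delta, p, q)$ such that $\|f\|_{\ell^q_m} \leq C\|f\|_{\ell^p_m}$ for all $f \in \ell^p_m$, which will show both that $\ell^p_m \subset \ell^q_m$ and that the inclusion is continuous. The key observation is that the non-degeneracy hypothesis $m(x) \geq \delta > 0$ forces $\ell^p_m$-functions to be uniformly bounded, and once we have an $\ell^\infty$ bound we can interpolate to move from the $p$-norm to the $q$-norm via the pointwise inequality $|f|^q = |f|^{q-p}|f|^p$.

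Concretely, I would proceed in two steps. First I would handle the case $q = \infty$: for any $f \in \ell^p_m$ and any $x \in V$, the single-term bound
\[
|f(x)|^p \,\delta \leq |f(x)|^p m(x) \leq \sum_{y \in V} |f(y)|^p m(y) = \|f\|_{\ell^p_m}^p
\]
yields $|f(x)| \leq \delta^{-1/p}\|f\|_{\ell^p_m}$, hence $\|f\|_{\ell^\infty} \leq \delta^{-1/p}\|f\|_{\ell^p_m}$. This settles $q=\infty$ and supplies the uniform bound needed for the intermediate case.

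For $p < q < \infty$, I would combine the $\ell^\infty$ bound just established with the interpolation identity: for every $x$,
\[
|f(x)|^q m(x) = |f(x)|^{q-p}\,|f(x)|^p\, m(x) \leq \|f\|_{\ell^\infty}^{q-p} \,|f(x)|^p\, m(x).
\]
Summing over $V$ and inserting the bound $\|f\|_{\ell^\infty}^{q-p} \leq \delta^{-(q-p)/p}\|f\|_{\ell^p_m}^{q-p}$ gives $\|f\|_{\ell^q_m}^q \leq \delta^{-(q-p)/p}\|f\|_{\ell^p_m}^q$, i.e.\ $\|f\|_{\ell^q_m} \leq \delta^{\frac{1}{q}-\frac{1}{p}}\|f\|_{\ell^p_m}$, which is the desired continuous embedding with an explicit constant that degenerates correctly as $\delta \to 0$ or $q \to p$.

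There is no real obstacle here; the lemma is a one-step consequence of non-degeneracy together with the standard $L^p$-interpolation trick. The only thing to be mildly careful about is the direction of the inequality (note $\frac{1}{q}-\frac{1}{p} \leq 0$, so the constant $\delta^{1/q - 1/p} \geq 1$, consistent with the expected failure of the reverse embedding) and the separate but essentially identical treatment of $q = \infty$ so that the statement covers the full range $1 \leq p < q \leq \infty$.
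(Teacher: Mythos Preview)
Your proof is correct and complete; the non-degeneracy bound $m(x)\geq\delta$ gives $\|f\|_{\ell^\infty}\leq\delta^{-1/p}\|f\|_{\ell^p_m}$, and then the interpolation $|f|^q=|f|^{q-p}|f|^p$ yields the embedding with the explicit constant $\delta^{1/q-1/p}$. The paper does not actually supply its own proof of this lemma---it simply states the result and refers to \cite{HL17}---so there is nothing to compare against; your argument is the standard one and would be appropriate here.
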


Now we introduce the gradient forms associated to the Laplacian and curvature dimension conditions on graphs following \cite{LY10,BHLLMY15}.
\begin{definition}
The gradient forms $\Gamma$ and the iterated gradient form $\Gamma_2$ are defined by
\[\begin{split}
2\Gamma(f,g)(x)&=(\Delta(fg)-f\Delta(g)-g\Delta(f))(x)\\
&=\frac{1}{m(x)}\sum_{y\sim x}\omega_{xy}(f(y)-f(x))(g(y)-g(x)),
\end{split}\]
\[2\Gamma_2(f,g)(x)=(\Delta\Gamma(f,g)-\Gamma(f,\Delta(g))-\Gamma(g,\Delta(f)))(x).\]
We write $\Gamma(f)=\Gamma(f,f),\Gamma_2(f)=\Gamma_2(f,f)$ for simplification.
\end{definition}
\begin{definition}
The graph $G$ satisfies the CD inequality $CD(n,K)$ if, for any function $f\in V^\mathbb{R}$ and at every vertex $x \in V$
\begin{equation*}
\Gamma_{2}(f)\geq \frac{1}{n}(\Delta f)^{2}+K\Gamma(f). \label{eqn:cd}
\end{equation*}
\end{definition}
On graphs, the
$CD$ condition implies a weak Harnack-type inequality (see [4]), but it seems insufficient to prove the
Li-Yau inequality. However, in \cite{BHLLMY15,HLLY17}, the authors proved a discrete analogue of Li-Yau inequality by modifying the curvature notion, which is called $CDE'$ condition. In the following, we recall their definition.

\begin{definition}
We say that a graph $G$ satisfies the $CDE'(x,n,K)$, if for any positive function $f \in V^\mathbb{R}$, we have
\begin{equation}
\widetilde{\Gamma_2}(f)(x) \geq \frac{1}{n} f(x)^2\left(\Delta \log f\right)(x)^2 + K \Gamma(f)(x).
\label{eqn:cde'}
\end{equation}
We say that $CDE'(n,K)$ is satisfied if $CDE'(x,n,K)$ is satisfied for all $x \in V$.
\end{definition}
We introduce a relation between the $CD$ conditon and the $CDE'$ condition.
\begin{remark}\label{rem:cde}
$CDE'(n,K)$ implies $CD(n,K)$ on graphs but visa versa is not true(see \cite{M15}). For diffusion Laplace operator, for example the Laplace-Beltrami operator
on Riemannian manifolds, the $CDE'(n,K)$ is equivalent to $CD(n,K)$(see \cite{BHLLMY15}).
\end{remark}

Next, we introduce a condition for the
completeness of infinite weighted graphs:
The graph $G$ is called \textit{complete}, that is, there exists a non-decreasing sequence $\{\eta_k\}_{k=0}^{\infty}\in C_0(V)$ such that
\begin{equation}\label{eq:complete}
\lim\limits_{k\rightarrow\infty}\eta_k=\mathbf{1}, ~~ \mbox{and}~~\Gamma(\eta_k)\leq\frac{1}{k},
\end{equation}
where $\mathbf{1}$ is the constant function on $V$, and its limit is pointwise.
This condition was defined for Markov diffusion
semigroups in \cite{BGL14} and adopted to graphs in \cite{HL17, GL17,GL}.
It is been proved that a large class of graphs under the assumptions possessing appropriate intrinsic metrics have been
shown to be complete, see \cite{HL17}. In particular, graphs with bounded Laplacians are always complete. This lemma shows that $C_0(V)$ is a dense subset of $D(Q)$(see \cite{HL17}).
\begin{lemma}\label{lemma1.2}
Let $G=(V,E,m,\omega)$ be a complete graph. for any $f\in D(Q)$, we have
$$\left\|f\eta_k-f\right\|_Q\rightarrow 0,\quad k\rightarrow\infty$$
\end{lemma}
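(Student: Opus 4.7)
The plan is to write $\|f\eta_k - f\|_Q^2 = \|f\eta_k - f\|_{\ell^2_m}^2 + \tfrac{1}{2}\sum_{x,y\in V}\omega_{xy}\bigl((f\eta_k - f)(y) - (f\eta_k - f)(x)\bigr)^2$ and show that both summands vanish in the limit $k\to\infty$. Since the cut-off sequence is non-decreasing and tends to $\mathbf{1}$ pointwise, one has $\eta_k\le\mathbf{1}$, and one can always reduce to the case $0\le\eta_k\le 1$ (e.g.\ by replacing $\eta_k$ with $\max(\eta_k,0)$, which does not increase $\Gamma(\eta_k)$). This gives $|f\eta_k - f|\le 2|f|$ pointwise, so the $\ell^2_m$-part is handled immediately by dominated convergence, using $f\in D(Q)\subset \ell^2_m$.

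For the energy part, the key identity is the discrete Leibniz rule
\[
(f\eta_k)(y) - (f\eta_k)(x) = \eta_k(x)\bigl(f(y)-f(x)\bigr) + f(y)\bigl(\eta_k(y)-\eta_k(x)\bigr),
\]
which yields
\[
(f\eta_k-f)(y) - (f\eta_k-f)(x) = (\eta_k(x)-1)\bigl(f(y)-f(x)\bigr) + f(y)\bigl(\eta_k(y)-\eta_k(x)\bigr).
\]
After applying $(a+b)^2\le 2a^2+2b^2$, the sum splits into
\[
I_k := \sum_{x,y}\omega_{xy}(\eta_k(x)-1)^2(f(y)-f(x))^2, \qquad II_k := \sum_{x,y}\omega_{xy}f(y)^2(\eta_k(y)-\eta_k(x))^2.
\]
The term $I_k$ is controlled by dominated convergence: the summands are bounded by $\omega_{xy}(f(y)-f(x))^2$, which is summable since $f\in D(Q)$, and $(\eta_k(x)-1)^2\to 0$ pointwise.

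The term $II_k$ is where the completeness assumption enters and, I expect, where the main care is required. Here I would interchange the order of summation and recognize the inner sum as $2m(y)\Gamma(\eta_k)(y)$:
\[
II_k = \sum_{y\in V} f(y)^2\sum_{x\in V}\omega_{xy}(\eta_k(y)-\eta_k(x))^2 = 2\sum_{y\in V} f(y)^2\,m(y)\,\Gamma(\eta_k)(y) \le \frac{2}{k}\|f\|_{\ell^2_m}^2,
\]
using the defining bound $\Gamma(\eta_k)\le \tfrac{1}{k}$. Since $f\in D(Q)\subset\ell^2_m$, this tends to $0$ as $k\to\infty$, finishing the proof. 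The main obstacle will simply be the careful bookkeeping in the Leibniz-type split and the justification of the interchange of sums (Fubini applies since all terms are non-negative).
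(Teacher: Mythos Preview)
Your argument is correct and is precisely the standard proof of this density statement. Note, however, that the paper does not actually supply its own proof of Lemma~\ref{lemma1.2}; it simply cites \cite{HL17}, so there is nothing in the paper to compare against beyond observing that your proof is the one implicitly invoked via that reference.
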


\subsection{Main results}
The first main result is the following Li-Yau inequality.
\begin{theorem}\label{th:ly-family}
Let $G=(V,E,m,\omega)$ be a complete graph and $m$ be a non-degenerate measure. If $G$ satisfies $CDE'(n,K)$,
then for any $0\leq f\in \ell^p_m$ with $p\in [1,\infty]$, we have
\begin{equation}\label{eq:ly-family2}
\frac{\Gamma(\sqrt{P_tf})}{P_tf}\leq\frac{1}{2}\left(1-\frac{2Kt}{2b+1}\right)\frac{\Delta P_tf}{P_tf}+\frac{n}{2}\left(\frac{b^2}{(2b-1)t}+\frac{K^2 t}{2b+1}-K\right).
\end{equation}
\end{theorem}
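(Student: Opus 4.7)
My plan is to adapt the heat-semigroup method of Horn--Lin--Liu--Yau \cite{HLLY17} to the unbounded setting, using the completeness cutoffs \eqref{eq:complete} to justify the operations that boundedness of $\Delta$ handles automatically in the known proofs. First I would reduce to the case where the initial data $f$ is bounded, strictly positive, and lies in $\ell^1_m\cap\ell^\infty_m$: the replacement $f\mapsto f+\varepsilon$ secures strict positivity so that $\sqrt{u(s)}$ and $\log u(s)$ are well defined for $u(s,\cdot):=P_s f$, while a spatial truncation combined with Lemma \ref{lemma1.1} and the $\ell^p$-contractivity of $P_t$ places $u$, $\Delta u$, $\Gamma(\sqrt u)$, and $u\,\Delta\log u$ in $\ell^2_m$ for each $s$. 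These regularizations are removed at the end by dominated convergence, together with Lemma \ref{lemma1.1} to move back to the original $\ell^p_m$ scale.

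Next, fix $T>0$ and $x_0\in V$, and for two positive $C^1$ functions $\alpha(s),\beta(s)$ on $(0,T]$ to be chosen, form the quantity
$$\Psi(s)\;:=\;P_{T-s}\!\left[\alpha(s)\,\Gamma(\sqrt{u(s)})\;-\;\beta(s)\,\Delta u(s)\right](x_0).$$
Differentiating in $s$ and using $\partial_s\sqrt{u}=\Delta u/(2\sqrt u)$ together with $\partial_s P_{T-s}=-\Delta P_{T-s}+P_{T-s}\partial_s$, the leading contribution to $\Psi'(s)$ collapses to a multiple of $P_{T-s}[\widetilde{\Gamma_2}(\sqrt u)](x_0)$, with the remainder being lower-order terms involving $\alpha',\beta',\Gamma(\sqrt u)$ and $\Delta u$. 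Applying $CDE'(n,K)$ bounds $\widetilde{\Gamma_2}(\sqrt u)$ below by $\tfrac1n\,u(\Delta\log\sqrt u)^2+K\,\Gamma(\sqrt u)$; completing the square jointly in the ratios $\Delta u/u$ and $\Gamma(\sqrt u)/u$ then produces a system of two ODEs on $\alpha,\beta$. Its one-parameter family of solutions is naturally parametrized by a constant $b$, and substituting the closed-form $\alpha(s),\beta(s)$ back into the resulting monotonicity $\Psi'(s)\geq 0$ and integrating from $s=0$ to $s=T$ produces \eqref{eq:ly-family2}, with the coefficients $\tfrac{b^2}{(2b-1)t}$, $\tfrac{K^2 t}{2b+1}$, and $1-\tfrac{2Kt}{2b+1}$ appearing directly from the explicit form of the ODE solutions.

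The principal obstacle, absent from the proofs for bounded $\Delta$, is that each step above moves $\Delta$ across the infinite sum defining $P_{T-s}$ and performs summation-by-parts on quantities such as $\Gamma(\sqrt u)$ and $\Delta u$; these operations are not automatically legal for unbounded $\Delta$. Accordingly, I would carry the argument out not on $\Psi$ itself but on a cutoff version $\Psi_k$ in which each occurrence of an infinite sum is weighted by $\eta_k$ from \eqref{eq:complete}, and then pass to the limit $k\to\infty$ via Lemma \ref{lemma1.2}. The error terms produced when shifting $\Delta$ onto $\eta_k$ during summation-by-parts all carry factors of $\sqrt{\Gamma(\eta_k)}\leq 1/\sqrt{k}$ and therefore vanish after Cauchy--Schwarz; it is precisely at this point that the completeness hypothesis enters. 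Removing the positivity shift $\varepsilon$ and the spatial truncation in the final step then recovers the inequality \eqref{eq:ly-family2} in the full generality stated for $0\leq f\in\ell^p_m$.
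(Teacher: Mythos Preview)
Your overall strategy---the semigroup interpolation $s\mapsto P_{T-s}\bigl[\alpha(s)\Gamma(\sqrt{u_s})-\beta(s)\Delta u_s\bigr]$, differentiated in $s$, combined with $CDE'(n,K)$ and a choice of weights producing the family indexed by $b$---is exactly the route the paper takes (their $\phi(s)=P_s\Gamma(\sqrt{P_{t-s}f+\epsilon})$ is your $\Psi$ after the substitution $s\mapsto T-s$, and their non-positive $\gamma$ plays the role of your $\beta$ after reparametrization). The passage from $u(\Delta\log\sqrt u)^2$ to quantities linear in $\Delta u$ and $\Gamma(\sqrt u)$ is not a joint completion of squares in $\Delta u/u$ and $\Gamma(\sqrt u)/u$; on graphs there is no chain rule, and one instead splits on the sign of $\Delta\sqrt u$ and uses $g\,\Delta\log g\le\Delta g$ together with $2\sqrt u\,\Delta\sqrt u=\Delta u-2\Gamma(\sqrt u)$. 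But this is standard in \cite{BHLLMY15,HLLY17} and you presumably meant that.

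The genuine gap is in your justification of the limit $k\to\infty$. You write that the commutator terms from shifting $\Delta$ past $\eta_k$ carry a factor $\sqrt{\Gamma(\eta_k)}\le k^{-1/2}$ and hence vanish by Cauchy--Schwarz. That Cauchy--Schwarz produces the paired factor $\sqrt{\Gamma(\Gamma(\sqrt u))}$, and for the product to go to zero in $\ell^1_m$ you need $\Gamma(\Gamma(\sqrt u))\in\ell^1_m$, i.e.\ $\Gamma(\sqrt u)\in D(Q)$. This does \emph{not} follow from $\ell^p$-contractivity of $P_t$, spatial truncation of $f$, or the embedding of Lemma~\ref{lemma1.1}; those only give $\Gamma(\sqrt u)\in\ell^1_m\hookrightarrow\ell^2_m$, which says nothing about $Q(\Gamma(\sqrt u))$. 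The paper supplies this missing ingredient via a Caccioppoli inequality (Lemma~\ref{lem:Cac}) applied to the subsolution inequality $\Delta\Gamma(\sqrt u)\ge 2\Gamma(\sqrt u,\tfrac{\Delta u}{2\sqrt u})+\tfrac{2}{n}u(\Delta\log\sqrt u)^2+2K\Gamma(\sqrt u)$ coming from $CDE'$ itself, together with the auxiliary $\ell^1$-bounds of Lemmas~\ref{lem:1} and~\ref{lem:6}; this is their Lemma~\ref{lem:important}, and it is precisely what makes the cutoff argument (their Claim~\ref{cl:1}) close. Your proposal needs the same estimate and does not indicate how to obtain it.
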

\begin{remark}
When $K=0$ and $b=1$, let $0\leq f\in \ell^\infty_m$, then $u=e^{t\Delta}f$ solves the heat equation, i.e. $\Delta u=\partial_t u$ with $0< u\in\ell^\infty_m$, this family of Li-Yau inequality \eqref{eq:ly-family2} reduced to the familiar Li-Yau inequality
\begin{equation}\label{eq:ly}
\frac{\Gamma(\sqrt{u})}{u}-\frac{\partial_t \sqrt{u}}{\sqrt{u}}\leq\frac{n}{2t},~~t>0.
\end{equation}
%Assume $0\leq f\in \ell^\infty_m$, then $u=e^{t\Delta}f$ solves the heat equation, i.e. $\Delta u=\partial_t u$ with $0< u\in\ell^\infty_m$, and we can obtain the familiar Li-Yau inequality of the positive and bounded solution of the heat equation, as follows:
%\begin{equation}\label{eq:ly1}
%\frac{\Gamma(\sqrt{u})}{u}-\frac{\partial_t \sqrt{u}}{\sqrt{u}}\leq\frac{n}{2t},~~t>0.
%\end{equation}
\end{remark}
The first application of Li-Yau inequality is Harnack inequality. We find boundedness of Laplacian is not required in the proof of Harnack inequality from Li-Yau inequality in \cite{BHLLMY15}. Integrating the Li-Yau inequality of the positive solution of the heat equation \eqref{eq:ly1} over $t$, we have the following corollary under the assumption of $CDE'(n,0)$.
In the context of deriving Harnack inequality and heat kernel upper bounds, two additional assumptions are needed as follow:
\[\omega_{\min}:=\inf_{x,y\in V,x\sim y}\omega_{xy}>0,\]
and
\[m_{\max}:=\sup_{x\in V}m(x)<\infty,\]

\begin{corollary}\label{coro:hanack}
Suppose $G=(V,E,m,\omega)$ be a complete graph and $m$ be a non-degenerate measure. If $G$ satisfies $CDE'(n,0)$,
then for all $x,z\in V$ and any $t < s$, and any $0\leq f\in \ell^p_m$ with $p\in [1,\infty]$, one has
\begin{equation}\label{eq:hanack}
P_t f(x)\leq  P_s f(z)\left(\frac{s}{t}\right)^{n} \exp\left(\frac{4m_{\max} d(x,z)^{2}}{\omega_{\min}( s-t)}\right),
\end{equation}
where $d(x,z)$ is the graph distance, that is, the number of edge of the shortest path from $x$ to $z$.
\end{corollary}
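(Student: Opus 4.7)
\medskip
\noindent\textbf{Proof proposal.} The plan is to specialize Theorem~\ref{th:ly-family} at $K=0$ and $b=1$, and then adapt the path-integration argument of \cite{BHLLMY15}, observing that that argument uses only the constants $\omega_{\min}$ and $m_{\max}$ and never the boundedness of $\Delta$. Since $G$ is connected and $0\le f\in\ell^p_m$, strict positivity of the heat kernel gives $u_\tau:=P_\tau f>0$ on $V$ for every $\tau>0$ (the case $f\equiv 0$ is trivial), so $\log u_\tau$ is well defined. Plugging $b=1$ and $K=0$ into \eqref{eq:ly-family2} and multiplying by $2$ yields the classical discrete Li--Yau inequality
\[
\frac{2\Gamma(\sqrt{u_\tau})(x)}{u_\tau(x)}\ \le\ \partial_\tau\log u_\tau(x)+\frac{n}{\tau},\qquad (x,\tau)\in V\times(0,\infty).
\]

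\medskip
From this single inequality I extract two tools. Discarding the nonnegative left-hand side gives the one-point time bound $\partial_\tau\log u_\tau(x)\ge -n/\tau$, which integrated over $[\tau_i,\tau_{i+1}]$ is responsible for the factor $(s/t)^n$. At a single edge $x\sim y$, isolating one term of the sum defining $\Gamma(\sqrt{u_\tau})(x)$ and using $m(x)\le m_{\max}$, $\omega_{xy}\ge\omega_{\min}$ yields
\[
\bigl(\sqrt{u_\tau(y)}-\sqrt{u_\tau(x)}\bigr)^2\ \le\ \frac{m_{\max}}{\omega_{\min}}\,u_\tau(x)\!\left(\partial_\tau\log u_\tau(x)+\frac{n}{\tau}\right).
\]
Combining this with the elementary inequality $|\log a-\log b|\le 2|\sqrt{a}-\sqrt{b}|/\sqrt{\min(a,b)}$, applied at whichever endpoint realizes $\min(u_\tau(x),u_\tau(y))$ so that the $u_\tau$-factor on the right cancels, produces a clean one-edge estimate of the form $|\log u_\tau(y)-\log u_\tau(x)|^2\le (4m_{\max}/\omega_{\min})(\partial_\tau\log u_\tau(w)+n/\tau)$.

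\medskip
Next I would fix a shortest path $x=x_0\sim x_1\sim\cdots\sim x_d=z$ with $d=d(x,z)$ together with the time partition $\tau_i=t+i(s-t)/d$, and telescope
\[
\log\frac{P_t f(x)}{P_s f(z)}=\sum_{i=0}^{d-1}\Bigl\{\bigl[\log u_{\tau_i}(x_i)-\log u_{\tau_{i+1}}(x_i)\bigr]+\bigl[\log u_{\tau_{i+1}}(x_i)-\log u_{\tau_{i+1}}(x_{i+1})\bigr]\Bigr\}.
\]
The first bracket in each summand is controlled above via $\partial_\tau\log u_\tau\ge -n/\tau$ (integrated on $[\tau_i,\tau_{i+1}]$), producing a total contribution $n\log(s/t)$; the second is controlled via the one-edge $\log u$-bound above. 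Applying Cauchy--Schwarz to the resulting sum of square-root terms, recognizing a telescoping Riemann sum of $\partial_\tau\log u_\tau$ that is absorbed into the time integral, and balancing the step length $(s-t)/d$ against the geodesic length $d$ leaves precisely
\[
\log\frac{P_t f(x)}{P_s f(z)}\ \le\ n\log\frac{s}{t}+\frac{4 m_{\max}\,d(x,z)^2}{\omega_{\min}(s-t)},
\]
and exponentiating gives~\eqref{eq:hanack}.

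\medskip
The main obstacle is the bookkeeping in the last step: the sum of spatial pieces involves $\sum_i\sqrt{\partial_\tau\log u_{\tau_{i+1}}+n/\tau_{i+1}}$, whose $\partial_\tau\log u$ content must be absorbed into---rather than added to---the time integral that already produces $n\log(s/t)$; any slack in the elementary $\sqrt{\cdot}$-vs-$\log$ inequality or in the Cauchy--Schwarz step would distort the exponent. Since this combinatorial balancing is exactly the one carried out in~\cite{BHLLMY15} and nowhere uses an $\ell^\infty$-bound on $\Delta$, once Theorem~\ref{th:ly-family} supplies the Li--Yau input the argument transfers without change to the unbounded setting.
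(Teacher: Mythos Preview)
Your proposal is correct and follows essentially the same approach as the paper: specialize Theorem~\ref{th:ly-family} to $K=0$, $b=1$ to recover the standard Li--Yau form \eqref{eq:ly}, and then invoke the path-integration argument of \cite{BHLLMY15}, noting that the latter uses only $\omega_{\min}$, $m_{\max}$ and pointwise positivity of $P_\tau f$, never a bound on $\Delta$. The paper gives even less detail than you do---it simply remarks that ``boundedness of Laplacian is not required in the proof of Harnack inequality from Li--Yau inequality in \cite{BHLLMY15}'' and states the corollary---so your sketch of the one-edge estimate and the telescoping decomposition is a faithful (and somewhat more explicit) rendering of the same argument.
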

As one of the most important applications of Li-Yau inequality, Harnack inequality can be derived to the following heat kernel upper bounds in these assumptions. We can define $p(t,x,y)=P_t\left(\frac{\delta_y}{m(y)}\right)(x)$ as the heat kernel (the fundamental solution of the heat equation) on weighted graph $G$, where $\delta_y(x)$ equals to $1$ when $x=y$, otherwise equals to $0$. We denote by $B(x, r) = \{y\in V: d(x,y)\leq r\}$ the ball centered in $x$ with radius $r$
, and denote by  $V(A):=\sum_{x\in A}m(x)$ the volume of a subset $A$ of $V$, we will write $V (x,r)$ for $V (B(x, r))$.
\begin{theorem}\label{th:Gauss-upper}
Suppose $G=(V,E,m,\omega)$ be a complete graph and $m$ be a non-degenerate measure. If $G$ satisfies $CDE'(n,0)$,  then there exist constant
$C>0$ depending on $n$ so that, for any $x,y\in V$ and for all $t>0$,
\[ p(t,x,y)\leq \frac{C}{V(x,\sqrt{t})}.\]
\end{theorem}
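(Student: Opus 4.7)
The plan is to convert the parabolic Harnack inequality from Corollary~\ref{coro:hanack} into a pointwise heat kernel bound by the standard strategy of averaging over a spatial ball and exploiting the sub-Markovian property of $P_t$. The starting observation is that $p(t,\cdot,y) = P_t f$ with $f = \delta_y/m(y)$, and $f \in \ell^1_m$ with $\|f\|_{\ell^1_m}=1$, so Corollary~\ref{coro:hanack} applies directly to $f$ with $p=1$.

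The concrete steps are as follows. First, apply the Harnack inequality with the pair $(t,x)$ and $(2t,z)$ for an arbitrary $z \in B(x,\sqrt{t})$: here $s/t=2$ and $d(x,z)^2/(s-t)\leq 1$, so the right-hand side collapses to
\begin{equation*}
p(t,x,y) \leq C_1\, p(2t,z,y), \qquad C_1 := 2^{n}\exp\!\left(\frac{4 m_{\max}}{\omega_{\min}}\right).
\end{equation*}
Second, multiply by $m(z)$ and sum over $z \in B(x,\sqrt{t})$; the left-hand side becomes $p(t,x,y)\,V(x,\sqrt{t})$, and the right-hand side is bounded by
\begin{equation*}
C_1 \sum_{z \in B(x,\sqrt{t})} p(2t,z,y)\,m(z) \leq C_1 \sum_{z \in V} p(2t,z,y)\,m(z).
\end{equation*}
Third, use the symmetry $p(2t,z,y)=p(2t,y,z)$ (immediate from the self-adjointness of $P_{2t}$ on $\ell^2_m$, which in turn comes from $\omega_{xy}=\omega_{yx}$) to rewrite the remaining sum as $P_{2t}\mathbf{1}(y)$, which is at most $1$ by sub-Markovianity of the semigroup associated with the Dirichlet form $Q$. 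Dividing by $V(x,\sqrt{t})$ yields the claim with $C=C_1$.

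The step that I expect to need the most care is the estimate $P_{2t}\mathbf{1}(y)\leq 1$, since $\mathbf{1}\notin\ell^2_m$ in general and the heat semigroup was defined via the Dirichlet form on $\ell^2_m$. The cleanest workaround is to use the completeness sequence $\eta_k$ from \eqref{eq:complete}: the semigroup is a contraction on $\ell^\infty_m$, so $0\leq P_{2t}\eta_k\leq \|\eta_k\|_{\ell^\infty_m}\leq 1$ pointwise, and monotone convergence applied to the non-negative summands $p(2t,y,z)\eta_k(z)m(z)$ gives $P_{2t}\eta_k(y)\uparrow \sum_{z\in V}p(2t,y,z)\,m(z)$, whose limit is therefore bounded by $1$. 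The remaining ingredients—applicability of Corollary~\ref{coro:hanack} to the point mass $\delta_y/m(y)$ and the symmetry identity for the kernel—are essentially bookkeeping once $P_t$ has been extended to $\ell^\infty_m$ as in \cite{KL12}.
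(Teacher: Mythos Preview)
Your proposal is correct and follows essentially the same route as the paper: apply the Harnack inequality with $s=2t$ and $z\in B(x,\sqrt t)$, then sum over the ball and use that $\sum_{z\in V} p(2t,z,y)m(z)\leq 1$. The only cosmetic difference is that the paper invokes stochastic completeness $P_t\mathbf{1}=\mathbf{1}$ directly (which holds under the standing completeness and non-degeneracy assumptions by \cite{HL17}) rather than the sub-Markovian bound you establish via $\eta_k$ and monotone convergence; your version is a bit more self-contained but otherwise the arguments coincide.
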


%We show further applications of Li-Yau inequality on graphs. First of all, we obtain Yau's Liouville theorem on positive harmonic functions on graphs satisfying $CDE'(n,0)$.
We say that a weighted graph G satisfies the assumption $(A)$ if one of the following holds:\\
$(A_1)$ The Laplacian $\Delta$ is bounded on $\ell^2_m$, i.e.  \eqref{eq:bounded} holds.\\
$(A_2)$ $G$ is complete, that is \eqref{eq:complete} holds, and $m$ is non-degenerate, see \eqref{eq:non-degenerate}.

%\begin{theorem}\label{th:Liouville}
%Let G be a finite or locally finite graph satisfying $(A)$ and the $CDE'(n,0)$ condition, then any positive harmonic function on G is constant. In particular, bounded harmonic functions are constant.
%\end{theorem}

As a further application of the Li-Yau inequality, we obtain an estimate
for the greatest lower bound of the $\ell^2$-spectrum known as Cheng's eigenvalue
estimate.
\begin{theorem}\label{th:Cheng}
Let G be a finite or locally finite graph satisfying $(A)$ and the $CDE'(n,-K)$ condition with some $K>0$,  and let $\lambda^*$ be the greatest lower bound for the $\ell^2$-spectrum of the graph Laplacian $\Delta$. Then we have
\[\lambda^* \leq \frac{Kn}{2}.\]
\end{theorem}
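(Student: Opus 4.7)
The strategy is to use Theorem~\ref{th:ly-family} (or the bounded-Laplacian Li-Yau of~\cite{BHLLMY15,HLLY17} under $(A_1)$) to extract a pointwise upper bound on $-\Delta u_t / u_t$ for a heat evolution $u_t = P_t f$ starting from a nonzero nonnegative compactly supported initial datum, and then to convert that pointwise bound to a spectral bound via the Rayleigh--Ritz characterization of $\lambda^*$.

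To set up, I would fix $0 \not\equiv f \in C_0(V)$ with $f \geq 0$, so that $f$ lies in every $\ell^p_m$. Set $u_t := P_t f$. On a connected graph, $u_t > 0$ pointwise for every $t > 0$ by the positivity-improving property of the heat semigroup, and semigroup theory gives $u_t \in D(\Delta) \subset \ell^2_m$ for all $t > 0$. Applying Theorem~\ref{th:ly-family} with curvature constant $-K$ and parameter $b = 1$, and discarding the nonnegative term $\Gamma(\sqrt{u_t})/u_t$ from the left-hand side, I obtain the pointwise estimate
\[
-\Delta u_t \leq F(t)\, u_t, \qquad F(t) := \frac{n\bigl(\frac{1}{t} + \frac{K^2 t}{3} + K\bigr)}{1 + \frac{2Kt}{3}},
\]
and an elementary computation shows $\lim_{t \to \infty} F(t) = nK/2$.

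Next, multiplying the pointwise inequality by the nonnegative function $u_t$ and summing against $m$ (the series converges absolutely by Cauchy--Schwarz, since $u_t, \Delta u_t \in \ell^2_m$), I get $\langle -\Delta u_t,\, u_t\rangle \leq F(t)\, \|u_t\|_{\ell^2_m}^2$. Combined with the variational characterization
\[
\lambda^* = \inf_{0 \neq g \in D(\Delta)} \frac{\langle -\Delta g,\, g\rangle}{\|g\|_{\ell^2_m}^2}
\]
and the fact that $u_t \not\equiv 0$, this yields $\lambda^* \leq F(t)$ for every $t > 0$; sending $t \to \infty$ gives $\lambda^* \leq nK/2$.

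The main obstacle I expect is largely bookkeeping: verifying that $u_t$ is strictly positive, lies in $D(\Delta)$ for all $t > 0$, and that $\langle -\Delta u_t, u_t\rangle$ is absolutely summable (each standard for a strongly continuous, positivity-improving, self-adjoint semigroup on a connected graph). A secondary point is that under $(A_1)$, Theorem~\ref{th:ly-family} as stated requires the hypotheses of $(A_2)$, so one would instead invoke the bounded-Laplacian Li-Yau of~\cite{BHLLMY15,HLLY17}; the rest of the argument is unchanged. Note also that the asymptotic limit $nK/2$ is independent of the choice of $b > 1/2$ in the Li-Yau family, so no fine optimization is needed.
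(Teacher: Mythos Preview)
Your argument is correct, and it takes a genuinely different route from the paper's.

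The paper's proof invokes the Haeseler--Keller result that for every $\lambda \le \lambda^*$ there exists a positive (generalized) eigenfunction $f_0$ with $\Delta f_0 = -\lambda f_0$, applies the Li--Yau inequality~\eqref{eq:ly-family2} to $P_t f_0$, uses $\Delta P_t f_0 = -\lambda P_t f_0$ to collapse the Laplacian term to a constant, drops the nonnegative $\Gamma$ term, and sends $t\to\infty$ to obtain $\lambda \le Kn/2$. You instead start from an arbitrary $0\le f\in C_0(V)$, extract from Li--Yau the pointwise bound $-\Delta u_t \le F(t)\,u_t$, pair with $u_t$ to control the Rayleigh quotient $\langle -\Delta u_t,u_t\rangle/\|u_t\|^2$, and pass to the limit.

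Your approach buys robustness: it needs no external spectral-theoretic input beyond the variational characterization of $\lambda^*$, and it sidesteps a delicate point in the paper's argument---namely whether the Haeseler--Keller positive solution actually lies in some $\ell^p_m$, which is what Theorem~\ref{th:ly-family} requires of its initial datum (such generalized eigenfunctions are typically \emph{not} in $\ell^2_m$ when $\lambda<\lambda^*$). The paper's route, when the integrability issue is handled, is marginally shorter because the eigenfunction identity replaces your Rayleigh-quotient step. Both arguments produce the same asymptotic constant $Kn/2$, and as you observe, the limit is independent of the parameter $b$.
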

\begin{remark}
The upper bounds of eigenvalues presented here are stronger than the ones with the slight weak curvature condition of Bauer et.al in \cite{BHLLMY15}.
\end{remark}
The paper is organized as follows. In next section, we introduce some useful propositions of heat semigroup. We prove some uniform estimations %about $\Delta(\log\cdot),\Gamma(\sqrt{\cdot})$
based on a discrete Caccioppoli inequality for Poisson's equations. In section 3, we prove our main results: the family of Li-Yau inequality for unbounded Laplacian with the assumption of $CDE'(n,K)$, and its applications, including Harnack inequality, heat kernel upper bounds and Cheng's eigenvalue estimate on graphs.

\section{Preliminaries}

The following integration by parts formula is useful later, see  \cite{KL10}.
\begin{lemma}[Green's formula]\label{lem:green}
For any $f\in D(Q)$ and $g\in D(\Delta)$ we have
$$\sum\limits_{x\in V}f(x)\Delta g(x)m(x)=-\sum\limits_{x\in V}\Gamma(f,g)(x)m(x).$$
\end{lemma}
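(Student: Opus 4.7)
The plan is to verify the identity first on the dense subspace $C_0(V)\subset D(Q)$ by a direct symmetrization argument, and then extend to all $f\in D(Q)$ by continuity in the $Q$-norm.

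First I would take $f\in C_0(V)$ and simply unfold the definition of $\Delta g$:
\[
\sum_{x\in V}f(x)\,\Delta g(x)\,m(x)=\sum_{x\in V}\sum_{y\sim x}\omega_{xy}f(x)(g(y)-g(x)).
\]
Since $f$ has finite support and the graph is locally finite, the double sum is a finite sum and hence absolutely convergent, so Fubini applies. Using the symmetry $\omega_{xy}=\omega_{yx}$ I swap the roles of $x$ and $y$ in the sum, average the two expressions, and obtain
\[
\sum_{x\in V}f(x)\,\Delta g(x)\,m(x)=-\tfrac{1}{2}\sum_{x,y\in V}\omega_{xy}(f(y)-f(x))(g(y)-g(x)),
\]
which, by the definition of $\Gamma(f,g)$, equals $-\sum_{x\in V}\Gamma(f,g)(x)m(x)$. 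This settles the identity on $C_0(V)$.

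Next I would pass to arbitrary $f\in D(Q)$ by approximation. By definition $D(Q)$ is the $Q$-norm completion of $C_0(V)$, so there exist $f_k\in C_0(V)$ with $\|f_k-f\|_Q\to 0$; in particular $\|f_k-f\|_{\ell^2_m}\to 0$ as well, since $\|\cdot\|_{\ell^2_m}\le \|\cdot\|_Q$. For fixed $g\in D(\Delta)$, the left-hand side is the inner product $\langle f,\Delta g\rangle_{\ell^2_m}$ with $\Delta g\in \ell^2_m$, so Cauchy--Schwarz in $\ell^2_m$ gives
\[
\Bigl|\langle f_k,\Delta g\rangle-\langle f,\Delta g\rangle\Bigr|\le \|f_k-f\|_{\ell^2_m}\|\Delta g\|_{\ell^2_m}\to 0.
\]
For the right-hand side, the expression $\sum_x\Gamma(f,g)(x)m(x)$ is exactly the symmetric bilinear form $Q(f,g)$ associated with $Q$, so Cauchy--Schwarz in the Dirichlet form yields
\[
\bigl|Q(f_k,g)-Q(f,g)\bigr|=\bigl|Q(f_k-f,g)\bigr|\le \sqrt{Q(f_k-f)}\sqrt{Q(g)}\to 0.
\]
Combining the two convergences with the identity on $C_0(V)$ finishes the proof.

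The only subtle point, and the step I expect to require a bit of care, is the justification that the right-hand side $\sum_x\Gamma(f,g)(x)m(x)$ really agrees with $Q(f,g)$ as an absolutely convergent sum for general $f\in D(Q)$ and $g\in D(\Delta)\subset D(Q)$. This follows because, as $C_0(V)$-limits, $f$ and $g$ satisfy $\tfrac12\sum_{x,y}\omega_{xy}(f(y)-f(x))^2<\infty$ and likewise for $g$, so the double sum defining $Q(f,g)$ converges absolutely by Cauchy--Schwarz term-by-term, and reassembling it into $\sum_x\Gamma(f,g)(x)m(x)$ is then a routine application of Fubini. The remaining ingredients—symmetry of $\omega$, local finiteness, and density of $C_0(V)$ in $D(Q)$—are all already in force from the setup.
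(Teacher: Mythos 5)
Your argument is correct. Note that the paper does not prove this lemma at all --- it is quoted from Keller--Lenz \cite{KL10} --- so there is no in-paper proof to compare against; your route (direct symmetrization on $C_0(V)$ using $\omega_{xy}=\omega_{yx}$ and local finiteness, then extension to $D(Q)$ by $Q$-norm density, with Cauchy--Schwarz in $\ell^2_m$ controlling $\langle f,\Delta g\rangle$ and Cauchy--Schwarz in the form controlling $Q(f,g)$) is exactly the standard one, and the one subtlety you flag, the absolute convergence of the double sum defining $Q(f,g)$ via term-by-term Cauchy--Schwarz, is handled correctly.
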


The next proposition is a consequence of standard Dirichlet form theory,
see \cite{FOT11} and \cite{KL12}.

\begin{proposition}\label{pro:semi}
For any $f\in \ell^p_m,p\in[1,\infty]$, we have $P_tf\in\ell^p_m$ and
\[\|P_tf\|_{\ell^p_m}\leq\|f\|_{\ell^p_m},~~~~\forall t\geq0.\]
Moreover, $P_tf\in D(\Delta)$ for any $f\in \ell^2_m$.
\end{proposition}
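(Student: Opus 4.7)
The plan is to treat this proposition as a direct consequence of the general Dirichlet-form and semigroup machinery of \cite{FOT11} applied to the specific quadratic form $Q$ at hand, and to carry it out in three stages: identify $Q$ as a Dirichlet form, transfer Markovianity to the semigroup, and then bootstrap $\ell^2_m$--contractivity to all $\ell^p_m$.

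First I would verify that $Q$ is a Dirichlet form on $\ell^2_m$. Closedness is automatic from the definition of $D(Q)$ as the completion of $C_0(V)$ under the $Q$--norm. For Markovianity, I need that for every normal contraction $T:\mathbb{R}\to\mathbb{R}$ (so $T(0)=0$ and $|T(a)-T(b)|\le|a-b|$), the function $Tf$ lies in $D(Q)$ and $Q(Tf)\le Q(f)$. This is immediate from the explicit formula
\[ Q(f) = \frac{1}{2}\sum_{x,y\in V}\omega_{xy}(f(y)-f(x))^2, \]
since $(Tf(y)-Tf(x))^2\le (f(y)-f(x))^2$ on every edge.

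Next, the Beurling--Deny theorem transfers this into the statement that $P_t=e^{t\Delta}$ is sub-Markovian on $\ell^2_m$: $0\le f\le 1$ forces $0\le P_tf\le 1$. From this I read off $\|P_tf\|_{\ell^\infty_m}\le \|f\|_{\ell^\infty_m}$ for bounded $f\in\ell^2_m$, and then extend $P_t$ by density to a contraction on $\ell^\infty_m$; symmetry of $P_t$ on $\ell^2_m$ dualizes this into an $\ell^1_m$--contraction, and Riesz--Thorin interpolation fills in the intermediate exponents. Since $C_0(V)$ sits densely in every $\ell^p_m$ with $p<\infty$ and the non-degeneracy of $m$ (Lemma \ref{lemma1.1}) makes the different extensions consistent on this common core, all the extensions agree with the spectral definition of $P_t$ on $\ell^2_m$.

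For the "moreover" part I would invoke the spectral theorem for the non-positive self-adjoint operator $-\Delta$ on $\ell^2_m$. Writing $P_t=\int_{[0,\infty)}e^{-t\lambda}\,dE_\lambda$, one has $\Delta P_t=-\int_{[0,\infty)}\lambda e^{-t\lambda}\,dE_\lambda$, which defines a bounded operator on $\ell^2_m$ for every $t>0$ because $\lambda\mapsto\lambda e^{-t\lambda}$ is bounded on $[0,\infty)$. Hence $\Delta P_tf\in\ell^2_m$ whenever $f\in\ell^2_m$, i.e.\ $P_tf\in D(\Delta)$. Nothing in this proof is genuinely hard; the only mild care is reconciling the extensions of $P_t$ across different $\ell^p_m$ spaces (handled by agreement on the dense common core $C_0(V)$) and the fact that $\ell^\infty_m$ has to be reached via sub-Markovianity rather than by duality from $\ell^1_m$.
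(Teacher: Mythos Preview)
Your proposal is correct and follows exactly the route the paper indicates: the paper does not give its own proof but simply states that the proposition ``is a consequence of standard Dirichlet form theory, see \cite{FOT11} and \cite{KL12}'', and your sketch unpacks precisely that standard argument (Markovian form $\Rightarrow$ sub-Markovian semigroup $\Rightarrow$ $\ell^1$/$\ell^\infty$ contractivity $\Rightarrow$ interpolation; spectral calculus for the domain statement).

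One small technical slip worth cleaning up: you cannot literally ``extend $P_t$ by density to a contraction on $\ell^\infty_m$'', since $C_0(V)$ is not norm-dense in $\ell^\infty_m$. The clean way is the reverse of what you wrote at the end: use sub-Markovianity to get the $\ell^\infty$ bound on $\ell^2_m\cap\ell^\infty_m$, dualize (via symmetry) to an $\ell^1$ bound on $\ell^1_m\cap\ell^2_m$, extend to all of $\ell^1_m$ by genuine density of $C_0(V)$ there, and then reach $\ell^\infty_m$ by duality $(\ell^1_m)^*=\ell^\infty_m$ (or, equivalently, via the heat-kernel representation). This does not affect the substance of your argument.
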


Now, we introduce some useful properties of heat semigroup $P_t$ on graphs.

\begin{proposition} For any $t,s>0,$ we have
\begin{enumerate}
  \item $P_t$ and $\Delta$ are communicative, i.e. for any $f\in D(\Delta)$,
\[\Delta P_tf=P_t\Delta f.\]
  \item $P_t$ satisfies the semigroup property. That is, for any $f\in \ell^p_m$,
\[P_{t}\circ P_sf=P_{t+s}f.\]
  \item $P_t$ is self-adjoint. That is,  for any $f,g \in \ell^2_m$,
\[\langle P_t f,g\rangle=\langle P_t g,f\rangle.\]
 % \item For any $f,g\in \ell^2_m$,
  %$P_tf\leq P_t g$ holds if $f\leq g$.
\end{enumerate}
\end{proposition}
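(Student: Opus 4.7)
The plan is to derive all three properties from the fact that $\Delta$ is the self-adjoint, non-positive generator of the regular Dirichlet form $Q$ on $\ell^2_m$, together with standard extension theorems for sub-Markovian semigroups (\cite{FOT11,KL12}). Once $\Delta$ is identified as self-adjoint on $\ell^2_m$, the spectral theorem produces a resolution of the identity $\{E_\lambda\}_{\lambda \geq 0}$ such that
\[
P_t = e^{t\Delta} = \int_0^{\infty} e^{-t\lambda}\, dE_\lambda,
\]
and every claim about $P_t$ on $\ell^2_m$ becomes a statement about the bounded Borel functions $\lambda \mapsto e^{-t\lambda}$.

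With this setup, claim (1) and claim (3) are almost immediate. For (1), the functional calculus gives $\lambda\, e^{-t\lambda} = e^{-t\lambda}\cdot \lambda$ as bounded Borel functions off the spectrum's singular part, so for $f\in D(\Delta)$ both $\Delta P_t f$ and $P_t \Delta f$ equal $\int_0^\infty (-\lambda) e^{-t\lambda}\, dE_\lambda f$. For (3), the self-adjointness of $\Delta$ forces $e^{t\Delta}$ to be self-adjoint on $\ell^2_m$, hence $\langle P_t f, g\rangle = \langle f, P_t g\rangle = \langle P_t g, f\rangle$ for real-valued $f,g \in \ell^2_m$.

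For claim (2), the identity $P_t \circ P_s = P_{t+s}$ on $\ell^2_m$ is another instance of the functional calculus: $e^{-t\lambda} e^{-s\lambda} = e^{-(t+s)\lambda}$. To push this to all $\ell^p_m$, I would first recall that, by the Beurling--Deny criterion, $(P_t)$ is sub-Markovian on $\ell^2_m$ and so extends (uniquely, as strongly continuous contraction semigroups for $p < \infty$ and weak$^\ast$-continuously for $p=\infty$) to every $\ell^p_m$ with the contractivity stated in Proposition \ref{pro:semi}. For $p \in [1, \infty)$, non-degeneracy of $m$ yields $C_0(V) \subset \ell^1_m \subset \ell^2_m$ via Lemma \ref{lemma1.1}, and $C_0(V)$ is dense in $\ell^p_m$; the semigroup identity on $\ell^2_m$ then transfers to $\ell^p_m$ by $\ell^p_m$-continuity of $P_t$ and $P_s$.

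The main obstacle will be the endpoint $p = \infty$, where $C_0(V)$ is not norm dense. Here I would exploit the completeness of $G$: given $f \in \ell^\infty_m$, the truncations $f_k := f \eta_k \in C_0(V) \subset \ell^2_m \cap \ell^\infty_m$, so $P_t \circ P_s f_k = P_{t+s} f_k$. Because $P_t$ is positivity preserving and $\ell^1_m$-contractive, it admits a nonnegative integral kernel representation dual to the $\ell^1_m$-extension; this lets me pass $k \to \infty$ vertex-by-vertex using dominated convergence (dominated by $\|f\|_{\ell^\infty_m}$) to conclude $P_s f_k(x) \to P_s f(x)$ and then $P_t(P_s f_k)(x) \to P_t(P_s f)(x)$ pointwise, establishing (2) on $\ell^\infty_m$. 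The routine but delicate point is to justify the kernel representation on $\ell^\infty_m$ via duality from $\ell^1_m$ together with Lemma \ref{lemma1.2}.
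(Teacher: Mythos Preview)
The paper does not supply its own proof of this proposition; it is stated without argument, immediately after Proposition~\ref{pro:semi}, as a standard fact from Dirichlet form and semigroup theory (the citations \cite{FOT11,KL12} given for Proposition~\ref{pro:semi} implicitly cover this proposition as well). Your proposal via the spectral theorem for the self-adjoint generator $\Delta$ together with the sub-Markovian extension to $\ell^p_m$ is precisely the standard route those references take, and it is correct.

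One small simplification worth noting: for item~(2) on $\ell^\infty_m$ you do not need the completeness hypothesis or the truncations $f\eta_k$. Since the $\ell^\infty_m$-semigroup is, by construction, the Banach adjoint of the $\ell^1_m$-semigroup (and the latter is symmetric), for any $f\in\ell^\infty_m$ and $g\in\ell^1_m$ one has
\[
\langle P_tP_sf,\,g\rangle=\langle f,\,P_sP_tg\rangle=\langle f,\,P_{t+s}g\rangle=\langle P_{t+s}f,\,g\rangle,
\]
using the already-established semigroup identity on $\ell^1_m$. As $g\in\ell^1_m$ is arbitrary, $P_tP_sf=P_{t+s}f$. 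This sidesteps the pointwise-limit justification you flagged as delicate and keeps the proposition free of the extra structural assumptions on $G$.
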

We introduce the Caccioppoli inequality for subsolutions to Poisson¡¯s equations on graphs, see \cite{HL17}. By proving the Caccioppoli inequality for subsolutions to Poisson¡¯s equations, the authors can derive a uniform upper bound about $P_tf$, see (\cite{HL17}, Lemma 3.6).
\begin{lemma}\label{lem:Cac}
Let $g,h\in V^\mathbb{R}$, if the graph $G$ satisfies
$$\Delta g\geq h$$
Then for any $\eta\in C_0(V)$, we have
$$\left\|\Gamma(g)\eta^2\right\|_{l^1_m}\leq C\left(\left\|\Gamma(\eta)g^2\right\|_{l^1_m}+\left\|gh\eta^2\right\|_{l^1_m}\right).$$
\end{lemma}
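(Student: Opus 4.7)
The plan is to run the graph analogue of the classical Caccioppoli argument: test the differential inequality $\Delta g\geq h$ against $g\eta^2$, apply Green's formula (Lemma~\ref{lem:green}) to convert the resulting sum into a mixed gradient, and then split this mixed gradient into a principal $\Gamma(g)\eta^2$ term plus a cross term that gets absorbed via Young's inequality.

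More precisely, since $\eta\in C_0(V)$ the test function $g\eta^2$ has finite support (so lies in $D(Q)$), and Green's formula gives
\[ -\sum_x g(x)\eta(x)^2\Delta g(x)\,m(x) \;=\; \sum_x \Gamma(g\eta^2,g)(x)\,m(x). \]
In the case $g\geq 0$ relevant to applications (where $g$ will be a power of $P_tf$ with $0\leq f\in\ell^p_m$), the hypothesis $\Delta g\geq h$ forces $g\eta^2\Delta g\geq g\eta^2 h$ pointwise, so the left-hand side is bounded above by $-\sum_x gh\eta^2\,m\leq \|gh\eta^2\|_{l^1_m}$.

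Next I would expand $\Gamma(g\eta^2,g)$ using the algebraic identity
\[ g(y)\eta(y)^2-g(x)\eta(x)^2 \;=\; \tfrac{1}{2}(g(y)-g(x))(\eta(y)^2+\eta(x)^2) + \tfrac{1}{2}(g(y)+g(x))(\eta(y)^2-\eta(x)^2). \]
After edge-symmetrization (using $\omega_{xy}=\omega_{yx}$), the first summand contributes exactly $\sum_x \Gamma(g)\eta^2\,m$, while the second produces a cross term
\[ R \;=\; \tfrac{1}{4}\sum_{x,\,y\sim x}\omega_{xy}(g(y)+g(x))(\eta(y)-\eta(x))(\eta(y)+\eta(x))(g(y)-g(x)). \]
Applying Young's inequality edgewise, grouping one factor as $(\eta(y)+\eta(x))(g(y)-g(x))$ and the other as $(g(y)+g(x))(\eta(y)-\eta(x))$, followed by $(a+b)^2\leq 2(a^2+b^2)$ on each squared factor and another symmetrization in $x,y$, yields $|R|\leq \epsilon\sum_x \Gamma(g)\eta^2\,m + \epsilon^{-1}\sum_x g^2\Gamma(\eta)\,m$ for any $\epsilon>0$.

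Combining gives $(1-\epsilon)\sum_x\Gamma(g)\eta^2\,m\leq \|gh\eta^2\|_{l^1_m} + \epsilon^{-1}\sum_x g^2\Gamma(\eta)\,m$, and the choice $\epsilon=\tfrac{1}{2}$ yields the claim with an explicit constant (e.g.\ $C=4$). I expect the main subtlety to be the sign step rather than the algebra: the bound $\Delta g\geq h$ only controls $g\eta^2\Delta g$ from below when $g\geq 0$, so the fully signed version of the lemma either relies on an implicit positivity hypothesis supplied by the context, or requires a decomposition $g=g^+-g^-$ with the Caccioppoli estimate run separately on each piece and the constants reconciled. That sign/bookkeeping is where I would spend the most care, since all the remaining work is elementary edge-symmetrization and Young's inequality.
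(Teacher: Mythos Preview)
The paper does not actually supply a proof of this lemma; it is quoted from \cite{HL17} and stated without argument. Your outline is precisely the standard Caccioppoli computation that \cite{HL17} carries out: test against $g\eta^2$, integrate by parts, symmetrize the edge sum to extract $\sum_x\Gamma(g)\eta^2 m$, and absorb the cross term by Young's inequality. The algebra you sketch is correct and gives the inequality with an explicit constant.

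Your diagnosis of the sign issue is also accurate and worth emphasizing: the passage from $\Delta g\geq h$ to $g\eta^2\Delta g\geq g\eta^2 h$ genuinely requires $g\geq 0$, and the lemma as printed in the paper omits that hypothesis. This is harmless here because the only place the paper invokes Lemma~\ref{lem:Cac} is in the proof of Lemma~\ref{lem:important}, where $g=\Gamma(\sqrt{u})\geq 0$. So you may simply add the standing assumption $g\geq 0$ (matching the intended application) rather than pursue the $g=g^+-g^-$ decomposition, which would not interact cleanly with the one-sided hypothesis $\Delta g\geq h$ anyway.
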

\begin{lemma}\label{lem:0}
Let $G=(V,E,m,\omega)$ be a complete graph, and $m$ be a non-degenerate measure. For any $f\in C_0(V)$ and $T>0$, we have $\max\limits_{[0,T]}\Gamma(P_tf)\in \ell_m^1$, moreover there exists $C_1(T,f)>0$ such that
\begin{equation}\label{eq:Gamma}
\left\|\max\limits_{[0,T]}\Gamma(P_tf)\right\|_{\ell_m^1}\leq C_1(T,f).
\end{equation}
In addition, we have $\max_{[0,T]}|\Gamma(P_tf,\Delta P_tf)|\in \ell_m^1$, moreover there exists $C_2(T,f)>0$ such that
\begin{equation}\label{eq:GammaDelta}
\left\|\max\limits_{[0,T]}|\Gamma(P_tf,\Delta P_tf)|\right\|_{\ell^1_m}\leq C_2(T,f).
\end{equation}
\end{lemma}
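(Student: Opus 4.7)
The plan is to express the pointwise maximum in $t$ via the fundamental theorem of calculus and then to bound each resulting $\ell^1_m$ norm uniformly in $s\in[0,T]$ with the Caccioppoli inequality (Lemma~\ref{lem:Cac}) together with the completeness cutoffs $\{\eta_k\}$. Since $G$ is locally finite and $f\in C_0(V)$, for every fixed $x$ the sum defining $\Gamma(P_tf)(x)$ and $\Gamma(P_tf,\Delta P_tf)(x)$ involves only finitely many neighbours, and $t\mapsto P_tf(y)$ is smooth, so differentiating term-by-term yields
\begin{equation*}
\partial_t\Gamma(P_tf)=2\Gamma(P_tf,\Delta P_tf),\qquad \partial_t\Gamma(P_tf,\Delta P_tf)=\Gamma(\Delta P_tf)+\Gamma(P_tf,\Delta^2P_tf).
\end{equation*}
Integrating in $t$ gives, pointwise in $x$,
\begin{equation*}
\max_{t\in[0,T]}\Gamma(P_tf)(x)\leq \Gamma(f)(x)+2\int_0^T\bigl|\Gamma(P_sf,\Delta P_sf)(x)\bigr|\,ds,
\end{equation*}
and analogously for $\max_{[0,T]}|\Gamma(P_tf,\Delta P_tf)(x)|$ with $\Gamma(\Delta P_sf)+|\Gamma(P_sf,\Delta^2P_sf)|$ as the integrand. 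Taking $\|\cdot\|_{\ell^1_m}$ and applying Tonelli reduces the task to producing uniform-in-$s$ bounds for $\|\Gamma(P_sf)\|_{\ell^1_m}$, $\|\Gamma(\Delta P_sf)\|_{\ell^1_m}$, and the mixed $\Gamma$-forms, since $\|\Gamma(f)\|_{\ell^1_m}$ and $\|\Gamma(f,\Delta f)\|_{\ell^1_m}$ are already finite because $f\in C_0(V)$.

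Next I control $\|\Gamma(P_sf)\|_{\ell^1_m}$ via the Caccioppoli inequality with $g=P_sf$ and $h=\Delta P_sf$, where the hypothesis $\Delta g\geq h$ holds as equality. Lemma~\ref{lem:Cac} applied with $\eta=\eta_k$ gives
\begin{equation*}
\bigl\|\Gamma(P_sf)\eta_k^2\bigr\|_{\ell^1_m}\leq C\Bigl(\bigl\|\Gamma(\eta_k)(P_sf)^2\bigr\|_{\ell^1_m}+\bigl\|P_sf\cdot\Delta P_sf\cdot\eta_k^2\bigr\|_{\ell^1_m}\Bigr).
\end{equation*}
The first term on the right is at most $(1/k)\|P_sf\|_{\ell^2_m}^2\leq (1/k)\|f\|_{\ell^2_m}^2$ by \eqref{eq:complete} and contraction of $P_s$ (Proposition~\ref{pro:semi}), hence tends to $0$. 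Using Cauchy--Schwarz, $\eta_k^2\leq 1$, and the commutation $\Delta P_sf=P_s\Delta f$, the second term is at most $\|f\|_{\ell^2_m}\|\Delta f\|_{\ell^2_m}$. Since $\{\eta_k^2\}$ is non-decreasing and converges pointwise to $\mathbf{1}$, monotone convergence on the left gives
\begin{equation*}
\bigl\|\Gamma(P_sf)\bigr\|_{\ell^1_m}\leq C\|f\|_{\ell^2_m}\|\Delta f\|_{\ell^2_m}.
\end{equation*}
The identical argument with $g=\Delta P_sf$, $h=\Delta^2P_sf$ (using $\Delta^2 P_sf=P_s\Delta^2 f$) yields $\|\Gamma(\Delta P_sf)\|_{\ell^1_m}\leq C\|\Delta f\|_{\ell^2_m}\|\Delta^2 f\|_{\ell^2_m}$. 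The mixed terms $\|\Gamma(P_sf,\Delta P_sf)\|_{\ell^1_m}$ and $\|\Gamma(P_sf,\Delta^2P_sf)\|_{\ell^1_m}$ are then controlled by the pointwise Cauchy--Schwarz $|\Gamma(u,v)|\leq \sqrt{\Gamma(u)\Gamma(v)}$ followed by Cauchy--Schwarz in $\ell^2_m$.

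All the $\ell^2_m$ norms encountered are finite: because the graph is locally finite and $f\in C_0(V)$, the support of $\Delta^j f$ lies in the $j$-neighbourhood of $\mathrm{supp}(f)$, which is still finite, so $\Delta^j f\in C_0(V)\subset \ell^2_m$ for every $j\geq 0$. Assembling the pieces produces constants $C_1(T,f)$ and $C_2(T,f)$ of the claimed form, essentially linear in $T$ with coefficients depending polynomially on $\|\Delta^j f\|_{\ell^2_m}$ for $j\leq 3$. The step I expect to be the main obstacle is the passage $k\to\infty$ in the Caccioppoli bound: one must ensure simultaneously that $\|\Gamma(\eta_k)(P_sf)^2\|_{\ell^1_m}\to 0$ (which uses both \eqref{eq:complete} and the non-degeneracy of $m$ to keep $\|P_sf\|_{\ell^2_m}$ bounded) and that monotone convergence lifts the cut-off estimate to a genuine bound on $\|\Gamma(P_sf)\|_{\ell^1_m}$; the iterated version for $\Delta^2 P_sf$ requires further that the commutation $\Delta^j P_s=P_s\Delta^j$ remain valid in $\ell^2_m$, which is where Proposition~\ref{pro:semi} is used repeatedly.
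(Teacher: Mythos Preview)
Your argument is correct. The paper does not give its own proof of this lemma; it records the Caccioppoli inequality (Lemma~\ref{lem:Cac}) from \cite{HL17} and then states the present lemma, pointing to \cite[Lemma~3.6]{HL17} for the derivation. The route you take---fundamental theorem of calculus in $t$ to replace $\max_{[0,T]}$ by an initial term plus a time integral of the $s$-derivative, then Caccioppoli with the completeness cutoffs $\eta_k$ and $g=P_s\Delta^{j}f$, $h=\Delta g$, followed by $k\to\infty$ via monotone convergence---is exactly the mechanism behind that cited result, so there is no genuine methodological difference to report.

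Two minor points worth tightening. First, your use of $\eta_k^2\leq 1$ and of monotone convergence on the left-hand side presupposes $0\leq\eta_k\leq 1$; this is indeed how the cutoffs are constructed in \cite{HL17}, but it is not written into \eqref{eq:complete} in this paper, so you should state it explicitly. Second, the commutation $\Delta^{j}P_s f=P_s\Delta^{j}f$ that you invoke for $j\leq 2$ (and implicitly $j=3$ in the second estimate) is justified because local finiteness forces $\Delta^{j}f\in C_0(V)\subset D(\Delta)$, which in turn uses the non-degeneracy of $m$ only through $C_0(V)\subset\ell^2_m$; it would be worth saying this once rather than deferring it to the last paragraph.
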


\begin{lemma} \label{lem:2}
For any functions $f,g\in V^\mathbb{R}$, if $|f|\leq B_1$, $|g|\geq B_2>0$, then there exist positive constants $C_1(B_1),C_2(B_1,B_2)>0$ such that $$\left|\Gamma(\frac{f}{g})\right|\leq C_1(B_1)\left|\Gamma(f)\right|+C_2(B_1,B_2)\left|\Gamma(g)\right|.$$
\end{lemma}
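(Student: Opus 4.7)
The plan is to exploit the pointwise definition of $\Gamma$ to reduce the estimate to a purely algebraic inequality about difference quotients of $f/g$, and then invoke the bounds $|f|\leq B_1$, $|g|\geq B_2$ to convert this into a bound by $\Gamma(f)$ and $\Gamma(g)$.

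Concretely, I would first fix a vertex $x$ and write
\[
2\Gamma\!\left(\tfrac{f}{g}\right)(x)=\frac{1}{m(x)}\sum_{y\sim x}\omega_{xy}\left(\frac{f(y)}{g(y)}-\frac{f(x)}{g(x)}\right)^{2}.
\]
The key algebraic identity is the ``add and subtract'' rearrangement
\[
\frac{f(y)}{g(y)}-\frac{f(x)}{g(x)}=\frac{(f(y)-f(x))g(x)-f(x)(g(y)-g(x))}{g(x)g(y)}=\frac{f(y)-f(x)}{g(y)}-\frac{f(x)(g(y)-g(x))}{g(x)g(y)}.
\]
Squaring and applying $(a-b)^{2}\leq 2a^{2}+2b^{2}$ yields a pointwise bound of the form
\[
\left(\frac{f(y)}{g(y)}-\frac{f(x)}{g(x)}\right)^{2}\leq \frac{2(f(y)-f(x))^{2}}{g(y)^{2}}+\frac{2f(x)^{2}(g(y)-g(x))^{2}}{g(x)^{2}g(y)^{2}}.
\]

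At this point I would insert the hypotheses: the denominators $g(x)^{2},g(y)^{2}$ are bounded below by $B_{2}^{2}$ since $|g|\geq B_{2}$, and $f(x)^{2}\leq B_{1}^{2}$. This replaces the $y$-dependent coefficients by absolute constants depending only on $B_{1},B_{2}$, so that summing over $y\sim x$ and dividing by $2m(x)$ gives
\[
\Gamma\!\left(\tfrac{f}{g}\right)(x)\leq \frac{2}{B_{2}^{2}}\,\Gamma(f)(x)+\frac{2B_{1}^{2}}{B_{2}^{4}}\,\Gamma(g)(x),
\]
which is the desired inequality with $C_{1}=2/B_{2}^{2}$ and $C_{2}=2B_{1}^{2}/B_{2}^{4}$.

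There is no real obstacle here; the only mildly delicate point is choosing the right decomposition of the cross difference so that the two error terms separate cleanly into $\Gamma(f)$ and $\Gamma(g)$, and then making sure the resulting constants are truly uniform in $y$, which is guaranteed by the uniform lower bound $|g|\geq B_{2}$. The identity is just the discrete analogue of the Leibniz/quotient rule $d(f/g)=df/g-f\,dg/g^{2}$, and the inequality $(a-b)^{2}\leq 2a^{2}+2b^{2}$ plays the role of replacing this exact identity by something compatible with the quadratic form $\Gamma$.
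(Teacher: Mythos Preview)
Your proof is correct and follows essentially the same route as the paper: rewrite the difference $f(y)/g(y)-f(x)/g(x)$ via the discrete quotient rule, apply $(a+b)^2\leq 2a^2+2b^2$, and then insert the bounds $|f|\leq B_1$, $|g|\geq B_2$. Your explicit constants $C_1=2/B_2^2$ and $C_2=2B_1^2/B_2^4$ are in fact the right ones; the paper's displayed constants have $B_1$ and $B_2$ interchanged, and the statement's claim that $C_1$ depends only on $B_1$ is a harmless mislabeling.
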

\begin{proof}
\begin{equation*}
\left.
\begin{aligned}
\left|\Gamma(\frac{f}{g})\right|
&=\frac{1}{2}\sum\limits_{y\sim x}\omega_{xy}(\frac{f(y)}{g(y)}-\frac{f(x)}{g(x)})^2\\
&=\frac{1}{2}\sum\limits_{y\sim x}\omega_{xy}[\frac{1}{g(y)}(f(y)-f(x))+f(x)(\frac{1}{g(y)}-\frac{1}{g(x)})]^2\\
&\leq2\frac{1}{2}\sum\limits_{y\sim x}\omega_{xy}\frac{1}{g^2(y)}(f(y)-f(x))^2\\
&+2\frac{1}{2}\sum\limits_{y\sim x}\omega_{xy}\frac{f^2(x)}{g^2(x)g^2(y)}(g(y)-g(x))^2\\
&\leq\frac{2}{B_1^2}\left|\Gamma(f)\right|+\frac{2B_2^2}{B_1^4}\left|\Gamma(g)\right|\\
&=C_1(B_1)\left|\Gamma(f)\right|+C_2(B_1,B_2)\left|\Gamma(g)\right|
\end{aligned}
\right.
\end{equation*}
\end{proof}
From the above two Lemmas, the authors in \cite{GL} have proved the following result, the key is to plus a positive constant $\epsilon$ on $P_t f$, which is still a solution of the heat equation, and makes it has a lower bound. We copy the proof here for sake of completeness.

From now, we let $u(t,x)=P_tf(x)+\epsilon$, where $\epsilon>0$ is a constant.
\begin{lemma}\label{lem:1} Let $G=(V,E,m,\omega)$ be a complete graph, and $m$ be a non-degenerate measure. For any $0\leq f\in C_0(V)$ and $T>0$,  then there exists a positive constant $C_1(\epsilon,T,f)>0$ such that
\begin{equation}\label{eq:GammaSqrt}
\left\|\max_{t\in [0,T]}\Gamma(\sqrt{u})\right\|_{l_m^1}\leq C_1(\epsilon,T,f).
\end{equation}
Moreover, there exists a positive constant $C_2(\epsilon,T,f)>0$ such that
\begin{equation}\label{eq:GammaSqrtFrac}
\left\|\max_{t\in [0,T]}\Gamma\left(\sqrt{u},\frac{\Delta u}{2\sqrt{u}}\right)\right\|_{l^1_m}\leq C_2(\epsilon,T,f).
\end{equation}
\end{lemma}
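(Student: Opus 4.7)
The plan is to reduce both inequalities to the already-proved Lemma \ref{lem:0} by exploiting two facts: the function $u=P_tf+\epsilon$ is bounded below pointwise by $\epsilon$, and $\Delta f \in C_0(V)$ whenever $f\in C_0(V)$ (since $f$ has finite support and the graph is locally finite, so $\Delta f$ is supported on $\mathrm{supp}(f)$ together with its one-step neighborhood, still a finite set). Consequently Lemma \ref{lem:0} applies not only to $f$ but also to $\Delta f$, yielding $\max_{t\in[0,T]}\Gamma(\Delta P_t f)=\max_{t\in[0,T]}\Gamma(P_t\Delta f)\in \ell^1_m$ with a bound depending only on $T$ and $\Delta f$.

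For the estimate \eqref{eq:GammaSqrt}, I would use the identity $\sqrt{a}-\sqrt{b}=(a-b)/(\sqrt{a}+\sqrt{b})$. Since $u(x)\geq\epsilon$ at every vertex, $\sqrt{u(y)}+\sqrt{u(x)}\geq 2\sqrt{\epsilon}$, so termwise
\[\Gamma(\sqrt{u})(x)=\frac{1}{2m(x)}\sum_{y\sim x}\omega_{xy}\frac{(u(y)-u(x))^2}{(\sqrt{u(y)}+\sqrt{u(x)})^2}\leq \frac{1}{4\epsilon}\,\Gamma(u)(x)=\frac{1}{4\epsilon}\,\Gamma(P_tf)(x),\]
where the last equality uses that $\Gamma$ annihilates additive constants. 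Taking the maximum over $t\in[0,T]$ and then the $\ell^1_m$ norm, Lemma \ref{lem:0} supplies \eqref{eq:GammaSqrt} with $C_1(\epsilon,T,f)=C_1(T,f)/(4\epsilon)$.

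For the estimate \eqref{eq:GammaSqrtFrac}, I would first apply Cauchy--Schwarz together with AM--GM on the bilinear form $\Gamma(\cdot,\cdot)$ to obtain
\[\left|\Gamma\left(\sqrt{u},\frac{\Delta u}{2\sqrt{u}}\right)\right|\leq \frac{1}{2}\Gamma(\sqrt{u})+\frac{1}{2}\Gamma\left(\frac{\Delta u}{2\sqrt{u}}\right).\]
The first summand is already controlled by the previous step. For the second, I would invoke Lemma \ref{lem:2} with numerator $\Delta u/2$ and denominator $\sqrt{u}$: the numerator is uniformly bounded because $\Delta u=\Delta P_tf=P_t(\Delta f)$ so $\|\Delta u\|_{\ell^\infty_m}\leq \|\Delta f\|_{\ell^\infty_m}$ by Proposition \ref{pro:semi}, while the denominator satisfies $\sqrt{u}\geq\sqrt{\epsilon}$. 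Lemma \ref{lem:2} then dominates $\Gamma(\Delta u/(2\sqrt{u}))$ by a constant times $\Gamma(\Delta P_tf)$ plus a constant times $\Gamma(\sqrt{u})$, and both terms have $\ell^1_m$ bounds on their $t$-maxima (the former from Lemma \ref{lem:0} applied to $\Delta f$, the latter from \eqref{eq:GammaSqrt}).

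The main obstacle is conceptual rather than technical: one must recognize that the pointwise lower bound $u\geq\epsilon$ is precisely what tames the nonlinear quantities $\sqrt{u}$ and $\Delta u/(2\sqrt{u})$, and that the troublesome factor $\Delta P_tf$ is controlled by reapplying Lemma \ref{lem:0} to $\Delta f\in C_0(V)$ rather than to $f$ itself. Once these two observations are in place, everything else is routine bookkeeping of constants, with all maxima over $t\in[0,T]$ passing through the pointwise inequalities without difficulty.
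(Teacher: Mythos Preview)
Your proposal is correct and follows essentially the same route as the paper's own proof: the identity $\sqrt{a}-\sqrt{b}=(a-b)/(\sqrt{a}+\sqrt{b})$ together with $u\geq\epsilon$ reduces \eqref{eq:GammaSqrt} to Lemma~\ref{lem:0}, and for \eqref{eq:GammaSqrtFrac} the paper likewise combines Cauchy--Schwarz with Lemma~\ref{lem:2} and then applies Lemma~\ref{lem:0} to $\Delta f\in C_0(V)$ to control $\Gamma(\Delta u)$. Your write-up is in fact slightly more explicit than the paper's in justifying why $\Delta f\in C_0(V)$ and why $\Delta u$ is uniformly bounded via Proposition~\ref{pro:semi}.
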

\begin{proof}
For any $0\leq f\in C_0(V)$, we have $\epsilon\leq P_tf+\epsilon\leq\|f\|_\infty+\epsilon$, by \eqref{eq:Gamma} in Lemma \ref{lem:0},
\begin{equation*}
\begin{aligned}
\left\|\max_{[0,T]}\Gamma(\sqrt{P_tf+\epsilon})\right\|_{l_m^1}
&=\frac{1}{2}\sum\limits_{x\in V}\max_{[0,T]}\sum\limits_{y\sim x}\omega_{xy}(\sqrt{P_tf+\epsilon}(y)-\sqrt{P_tf+\epsilon}(x))^2\\
&=\frac{1}{2}\sum\limits_{x\in V}\max_{[0,T]}\sum\limits_{y\sim x}\omega_{xy}(\frac{P_tf(y)-P_tf(x)}{\sqrt{P_tf+\epsilon}(y)+\sqrt{P_tf+\epsilon}(x)})^2\\
&\leq\frac{1}{4\epsilon}\left(\frac{1}{2}\sum\limits_{x\in V}\max_{[0,T]}\sum_{y\sim x}\omega_{xy}((P_tf)(y)-(P_tf)(x))^2\right)\\
&\leq \frac{1}{4\epsilon}\left\|\max\limits_{[0,T]}\Gamma(P_tf)\right\|_{l_m^1}\\
&\leq \frac{1}{4\epsilon}C_1(T,f)=:C_1(\epsilon,T,f).
\end{aligned}
\end{equation*}
Due to $\Delta f\in C_0(V)$, and from Lemma \ref{lem:0}, we have
$$\left|\Gamma(\Delta (P_{t}f+\epsilon))\right|(x)=\left|\Gamma(\Delta P_{t}f)\right|(x)=\left|\Gamma(P_{t}\Delta f)\right|(x)\leq \max\limits_{t\in[0,T]}\left|\Gamma(P_{t}\Delta f)\right|(x)\in \ell^1_m,$$
moreover there exists a constant $C_1(T,\Delta f)$ such that
\[\left\|\Gamma(\Delta u)\right\|_{l^1_m}\leq C_1(T,\Delta f).\]
By Cauchy-Schwartz inequality, we have
$$\Gamma(f,g)\leq\sqrt{\Gamma(f)\Gamma(g)}\leq\frac{1}{2}(\Gamma(f)+\Gamma(g)).$$
Combining with Lemma \ref{lem:2}, we obtain
\[\begin{aligned}
\left\|\Gamma\left(\sqrt{u},\frac{\Delta u}{2\sqrt{u}}\right)\right\|_{l^1_m}
&\leq\frac{1}{2}\left\|\Gamma(\sqrt{u})\right\|_{l^1_m}+ \frac{1}{8}\left\|\Gamma\left(\frac{\Delta u}{\sqrt{u}}\right)\right\|_{l^1_m}\\
&\leq \left(\frac{1}{2}+C_1(\epsilon,f)\right)\left\|\Gamma(\sqrt{u})\right\|_{l^1_m}+\frac{1}{8}C_2(\epsilon,f)\left\|\Gamma(\Delta u)\right\|_{l^1_m}\\
&\leq \left(\frac{1}{2}+C_1(\epsilon,f)\right)C_1(\epsilon,T,f)+\frac{1}{8}C_2(\epsilon,f)C_1(T,\Delta f)=:C_2(\epsilon,T,f).
\end{aligned}\]
That finishes the proof.
\end{proof}

We will also need to prove the following two technique Lemmas.
\begin{lemma} \label{lem:6}
Let $G=(V,E,m,\omega)$ be a complete graph and $m$ be a non-degenerate measure. For any $0\leq f\in C_0(V)$ and $t\in [0,T]$, let $\epsilon>0$, then we have $(\Delta \log u)^2\in \ell^1_m$.
Moreover, there exists a positive constant $C_3(T,f,\epsilon)>0$ such that
\[\left\|\max_{[0,T]}(\Delta \log u)^2\right\|_{\ell^1_m}\leq C_3(T,f,\epsilon).\]
\end{lemma}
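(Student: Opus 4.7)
The plan is to bound $(\Delta \log u)^2$ pointwise in terms of $(\Delta u)^2$ and $\Gamma(u)^2$, and then invoke the uniform $\ell^1_m$ estimates supplied by the earlier lemmas.

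First, I would establish a pointwise comparison between $\Delta \log u$ and $\Delta u/u$ via a second-order Taylor estimate for $\log$. Since $\epsilon \le u \le M := \|f\|_\infty + \epsilon$, one has
$$\left|\log a - \log b - \frac{a-b}{b}\right| \le \frac{(a-b)^2}{ab} \le \frac{(a-b)^2}{\epsilon^2}, \qquad a, b \in [\epsilon, M],$$
which follows from $\log(1+s) - s = -\int_0^s \frac{t}{1+t}\,dt$. Applying this at each edge $\{x,y\}$ and summing with weights $\omega_{xy}/m(x)$ gives
$$\left|\Delta \log u(x) - \frac{\Delta u(x)}{u(x)}\right| \le \frac{2\Gamma(u)(x)}{\epsilon^2},$$
and combining with $1/u \le 1/\epsilon$ and squaring yields the pointwise bound
$$(\Delta \log u(x))^2 \le \frac{2(\Delta u(x))^2}{\epsilon^2} + \frac{8\,\Gamma(u)(x)^2}{\epsilon^4}.$$

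The first term is easily controlled uniformly in $t\in[0,T]$: since $\Delta f\in C_0(V)\subset \ell^2_m$, the $\ell^2_m$-contraction in Proposition \ref{pro:semi} together with commutativity $\Delta u=P_t\Delta f$ gives $\|\Delta u\|_{\ell^2_m}=\|P_t\Delta f\|_{\ell^2_m}\le\|\Delta f\|_{\ell^2_m}$, so $\|(\Delta u)^2\|_{\ell^1_m}$ is uniformly bounded on $[0,T]$. The passage from pointwise-in-$t$ to $\max_{[0,T]}$ is handled exactly as in the proof of Lemma \ref{lem:1}, by applying Lemma \ref{lem:0} to $\Delta f$ in place of $f$.

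The principal obstacle is the uniform $\ell^1_m$ control of $\Gamma(u)^2$: the $\ell^1_m$ bound on $\Gamma(u)$ from Lemma \ref{lem:0} does not upgrade to $\ell^2_m$, and $\Gamma(u)$ is generally not in $\ell^\infty$ when the Laplacian is unbounded. My plan is to exploit the discrete product rule $\Delta(u^2)=2u\Delta u+2\Gamma(u)$, which lets one view $u^2$ as a subsolution of a Poisson-type equation with source bounded below by $h:=2\Gamma(u)-2M|\Delta u|$, and apply the Caccioppoli inequality Lemma \ref{lem:Cac} with $g=u^2$ and the completeness cut-offs $\eta_k$. The resulting terms $\|\Gamma(\eta_k)u^4\|_{\ell^1_m}$, $\|\Gamma(u)\eta_k^2\|_{\ell^1_m}$, and $\|u^2|\Delta u|\eta_k^2\|_{\ell^1_m}$ are each controlled uniformly on $[0,T]$ (respectively by $M^4/k$, Lemma \ref{lem:0}, and Proposition \ref{pro:semi} applied to $\Delta f$, using also $u\le M$ and $|\Delta u|\le\|\Delta f\|_\infty$). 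Passing $k\to\infty$ via Lemma \ref{lemma1.2} and combining with the first term produces the constant $C_3(T,f,\epsilon)$.
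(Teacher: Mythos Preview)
Your pointwise estimate is correct and essentially equivalent to the paper's (the paper splits into the cases $\Delta\log u\ge 0$ and $<0$ using $\log z\le z-1$, arriving at the same bound $(\Delta\log u)^2\le 2\epsilon^{-2}(\Delta u)^2+2\epsilon^{-4}\Gamma(u)^2$). Your treatment of the $(\Delta u)^2$ term via $\ell^2_m$-contractivity of $P_t$ is also fine and arguably cleaner than the paper's route through Green's formula and Lemma~\ref{lem:0}.

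The genuine gap is in your handling of $\Gamma(u)^2$. You assert that ``the $\ell^1_m$ bound on $\Gamma(u)$ from Lemma~\ref{lem:0} does not upgrade to $\ell^2_m$'', but this is exactly where the hypothesis that $m$ is non-degenerate enters: by Lemma~\ref{lemma1.1} one has $\ell^1_m\hookrightarrow\ell^2_m$, and concretely $m(x)\ge\delta$ gives
\[
\sum_{x\in V}\Gamma(u)(x)^2\,m(x)\;\le\;\frac{1}{\delta}\Bigl(\sum_{x\in V}\Gamma(u)(x)\,m(x)\Bigr)^2\;\le\;\frac{1}{\delta}\,C_1(T,f)^2,
\]
which also handles the $\max_{[0,T]}$ version since $\max_t\Gamma(u)^2=(\max_t\Gamma(u))^2$. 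This is precisely what the paper does, in one line.

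Your proposed workaround via Caccioppoli does not succeed: with $g=u^2$, Lemma~\ref{lem:Cac} bounds $\|\Gamma(u^2)\eta_k^2\|_{\ell^1_m}$, not $\|\Gamma(u)^2\eta_k^2\|_{\ell^1_m}$. The quantity $\Gamma(u^2)(x)=\frac{1}{2m(x)}\sum_{y\sim x}\omega_{xy}(u(y)+u(x))^2(u(y)-u(x))^2$ is pointwise comparable to $\Gamma(u)$ (since $\epsilon\le u\le M$), not to $\Gamma(u)^2$; a bound on the former says nothing new beyond Lemma~\ref{lem:0}. There is no pointwise inequality of the form $\Gamma(u)^2\le C\,\Gamma(u^2)$ without an assumption tantamount to bounded Laplacian. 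So the Caccioppoli detour both misses the target and is unnecessary once you invoke the non-degeneracy of $m$.
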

\begin{proof}
Since $\log z\leq z-1$ for any $z\geq0$, then for any $x\in V$, we have
\begin{equation*}
\Delta \log u(x)=\frac{1}{m(x)}\sum_{y\sim x}\omega_{xy}\log\frac{u(y)}{u(x)}\leq\frac{\Delta u(x)}{u(x)},
\end{equation*}
For any $x\in V$ such that $\Delta \log u(x)\geq 0$ , it is true that
\begin{equation}\label{eq:estimate1}
\left(\Delta \log u(x)\right)^2\leq\frac{1}{\epsilon^2}(\Delta u)^2 (x)\leq\frac{2}{\epsilon^2}(\Delta u)^2 (x).
\end{equation}
On the other hand,
%we firstly split the $\Delta u$ into two pieces,
%\[\Delta u(x)= \frac{1}{m(x)}\sum_{y\sim x,u(y)-u(x)>0}\omega_{xy}(u(y)-u(x))+\frac{1}{m(x)}\sum_{y\sim x,u(y)-u(x)<0}\omega_{xy}(u(y)-u(x)):=A(x)-B(x).\]
%Since $\epsilon\leq u\leq \|f\|_\infty+\epsilon$,
we have
\[\begin{split}
\Delta \log u(x)
&=-\frac{1}{m(x)}\sum_{y\sim x}\omega_{xy}\log \frac{u(x)}{u(y)}\\
&\geq\frac{1}{m(x)}\sum_{y\sim x}\omega_{xy}\frac{u(y)-u(x)}{u(y)}\\
&=-\frac{1}{m(x)}\sum_{y\sim x}\omega_{xy}\frac{(u(y)-u(x))^2}{u(x)u(y)}+\frac{\Delta u(x)}{u(x)}\\
&\geq-\frac{1}{\epsilon^2}\Gamma(u)(x)+\frac{\Delta u(x)}{u(x)}
%&=\frac{1}{m(x)}\sum_{y\sim x,u(y)-u(x)>0}\omega_{xy}\frac{u(y)-u(x)}{u(y)}+\frac{1}{m(x)}\sum_{y\sim x,u(y)-u(x)\leq0}\omega_{xy}\frac{u(y)-u(x)}{u(y)}\\
%&\geq\frac{1}{\|f\|_\infty+\epsilon}A(x)-\frac{1}{\epsilon}B(x)\\
%&=\frac{1}{\epsilon}\Delta u +\left(\frac{1}{\|f\|_\infty+\epsilon}-\frac{1}{\epsilon}\right)A(x).\\
\end{split}\]
then for any $x\in V$ such that $\Delta \log u(x)< 0$, we have
\begin{equation}\label{eq:estimate2}
\begin{split}
(\Delta \log u(x))^2
&\leq\frac{2}{\epsilon^4}(\Gamma(u)(x))^2+\frac{2}{\epsilon^2}(\Delta u(x))^2.
\end{split}
\end{equation}
%From the inequality \eqref{eq:logest}, we have
%\[\Delta\log u(x)\leq \frac{A(x)+B(x)}{u(x)}\leq\frac{A(x)}{u(x)}\]

%when $\Delta \log u(x)< 0$, from the above computation, there is a simple result about $A$ and $B$,
%\[A<-\frac{\|f\|_\infty+\epsilon}{\epsilon}B.\]
%

%we have the similar estimate as \eqref{eq:estimate}. So
Then from \eqref{eq:estimate1} and \eqref{eq:estimate2}, we obtain
\[\sum_{x\in V}(\Delta \log u)^2(x)m(x)\leq\frac{2}{\epsilon^2}\sum_{x\in V}(\Delta u(x))^2 m(x)+\frac{2}{\epsilon^4}\sum_{x\in V,\Delta \log u(x)< 0}(\Gamma(u)(x))^2m(x).\]

From Proposition \ref{pro:semi}, we know that $P_t f\in D(\Delta)$ with $f\in C_0(V)\subset \ell^2_m$. Moreover, due to $\Delta P_t f=P_t\Delta f$ and $\Delta f\in C_0(V)$ when $f\in C_0(V)$, we have $\Delta P_t f \in D(Q)$ by \eqref{eq:Gamma} in Lemma \ref{lem:0}. Therefore, from Green formula, we have
\[
\frac{2}{\epsilon^2}\sum_{x\in V}m(x)(\Delta u)^2(x)=\frac{2}{\epsilon^2}\sum_{x\in V}m(x)(\Delta P_t f)^2(x)
=-\frac{2}{\epsilon^2}\sum_{x\in V}m(x)\Gamma(P_t f,\Delta P_t f)(x).
\]
Since the measure is non-degenerate, i.e. $\inf_{x\in V}m(x)=\delta>0$, then we have
\[\sum_{x\in V,\Delta \log u(x)< 0}(\Gamma(u)(x))^2m(x)\leq\sum_{x\in V}(\Gamma(u)(x))^2m(x)\leq \frac{1}{\delta}\left(\sum_{x\in V}\Gamma(u)(x)m(x)\right)^2.\]
According to Lemma \ref{lem:0}, we can conclude what we desire.
\end{proof}

Since
\begin{equation}\label{eq:solution}
\Delta\sqrt{u}=\frac{\Delta u}{2\sqrt{u}}-\frac{\Gamma(\sqrt{u})}{\sqrt{u}}.
\end{equation}

Then $CDE'(n,K)$ condition for $\sqrt{u}$ can be rewritten into
\begin{equation}\label{eq:CDE'}
\frac{1}{2}\Delta\Gamma(\sqrt{u})-\Gamma(\sqrt{u},\frac{\Delta u}{2\sqrt{u}})\geq \frac{1}{n}u(\Delta\log \sqrt{u})^2+K\Gamma(\sqrt{u}).
\end{equation}

Then we have the following Lemma.
\begin{lemma} \label{lem:important}
 Let $G=(V,E,m,\omega)$ be a complete graph and the measure $m$ is non-degenerate. If $G$ satisfies $CDE'(n,K)$ with $K\in \mathbb{R}$, then for any $0\leq f\in C_0(V)$ and $t\geq 0$, let $\epsilon>0$, we have
$$\Gamma(\sqrt{u})\in D(Q).$$
\end{lemma}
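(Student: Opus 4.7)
The plan is to derive a pointwise differential inequality $\Delta\Gamma(\sqrt{u}) \geq h$ with $h \in \ell^1_m$ by feeding $\sqrt{u}$ into the $CDE'(n,K)$ hypothesis in the form \eqref{eq:CDE'}, then apply the Caccioppoli estimate (Lemma \ref{lem:Cac}) with the completeness cutoffs $\eta_k$ to bound $\Gamma(\Gamma(\sqrt{u}))$ globally, and finally place $\Gamma(\sqrt{u})$ in $D(Q)$ by an approximation argument of the type in Lemma \ref{lemma1.2}. Concretely, rearranging \eqref{eq:CDE'} gives
\[
\Delta\Gamma(\sqrt{u}) \;\geq\; 2\,\Gamma\!\left(\sqrt{u},\tfrac{\Delta u}{2\sqrt{u}}\right) + \tfrac{2}{n}\,u\,(\Delta\log\sqrt{u})^2 + 2K\,\Gamma(\sqrt{u}) \;=:\; h.
\]

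I would first verify $h \in \ell^1_m$: the cross-term lies in $\ell^1_m$ by \eqref{eq:GammaSqrtFrac}; the middle term follows from the uniform bound $\epsilon \leq u \leq \|f\|_\infty + \epsilon$ together with $(\Delta\log u)^2 \in \ell^1_m$ from Lemma \ref{lem:6}; the last term is handled by \eqref{eq:GammaSqrt}. Then I would upgrade $\Gamma(\sqrt{u}) \in \ell^1_m$ (Lemma \ref{lem:1}) to $\Gamma(\sqrt{u}) \in \ell^\infty \cap \ell^2_m$ via the non-degeneracy embedding of Lemma \ref{lemma1.1}, noting the quantitative bound $\|\Gamma(\sqrt{u})\|_\infty \leq \delta^{-1}\|\Gamma(\sqrt{u})\|_{\ell^1_m}$. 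Applying Lemma \ref{lem:Cac} with $g = \Gamma(\sqrt{u})$ and $\eta = \eta_k$ then yields
\[
\|\Gamma(\Gamma(\sqrt{u}))\,\eta_k^2\|_{\ell^1_m} \;\leq\; C\!\left(\|\Gamma(\eta_k)\,\Gamma(\sqrt{u})^2\|_{\ell^1_m} + \|\Gamma(\sqrt{u})\,h\,\eta_k^2\|_{\ell^1_m}\right),
\]
where the first term on the right is dominated by $\tfrac{1}{k}\|\Gamma(\sqrt{u})\|_{\ell^2_m}^2 \to 0$ and the second is uniformly bounded by $\|\Gamma(\sqrt{u})\|_\infty \|h\|_{\ell^1_m}$. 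Since $\eta_k \uparrow \mathbf{1}$, monotone convergence on the left gives $\Gamma(\Gamma(\sqrt{u})) \in \ell^1_m$.

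Because $\Gamma(\sqrt{u}) \in \ell^2_m$ and has a finite $Q$-form, it remains to establish membership in $D(Q)$ itself. I would take the approximants $\Gamma(\sqrt{u})\eta_k \in C_0(V)$ and verify $\|\Gamma(\sqrt{u})\eta_k - \Gamma(\sqrt{u})\|_Q \to 0$ using the standard pointwise estimate $\Gamma(\Gamma(\sqrt{u})(1-\eta_k)) \leq 2(1-\eta_k)^2\,\Gamma(\Gamma(\sqrt{u})) + 2\,\Gamma(\sqrt{u})^2\,\Gamma(\eta_k)$ combined with dominated and monotone convergence, following the same template as Lemma \ref{lemma1.2}. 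The main obstacle throughout is keeping the Caccioppoli boundary term $\|\Gamma(\sqrt{u})\,h\,\eta_k^2\|_{\ell^1_m}$ bounded uniformly in $k$; this forces a true pointwise $\ell^\infty$-bound on $\Gamma(\sqrt{u})$, which is where the non-degeneracy $\inf m \geq \delta > 0$ plays its decisive role and distinguishes the present unbounded setting from the bounded Laplacian arguments.
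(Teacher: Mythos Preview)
Your proposal is correct and follows essentially the same route as the paper: rewrite $CDE'(n,K)$ as $\Delta\Gamma(\sqrt{u})\geq h$, use the non-degeneracy embedding to place $\Gamma(\sqrt{u})$ in $\ell^2_m\cap\ell^\infty_m$, feed $g=\Gamma(\sqrt{u})$ and the completeness cutoffs $\eta_k$ into the Caccioppoli inequality, bound the right-hand side via Lemmas~\ref{lem:1} and~\ref{lem:6}, and pass to the limit to get $\Gamma(\Gamma(\sqrt{u}))\in\ell^1_m$. The only cosmetic differences are that the paper invokes Fatou's lemma rather than monotone convergence for the limit in $k$, and that instead of carrying out the $\eta_k$-approximation to $D(Q)$ by hand, the paper cites a general result (Theorem~6 in \cite{KL12}) stating that any $\ell^2_m$-function with finite $Q$-form is automatically approximable in $Q$-norm by finitely supported functions, so that $Q(\Gamma(\sqrt{u}))<\infty$ already suffices.
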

\begin{proof}
We know $\Gamma(\sqrt{u})\in \ell^1_m$, then by Lemma \ref{lemma1.1}, we have $\Gamma(\sqrt{u})\in \ell^2_m, \ell^{\infty}_m$. From Theorem 6 in \cite{KL12}, we know that $\Gamma(\sqrt{u})$ can be approximated in $Q$-norm by functions with finite support. Then we just need to show that $$Q(\Gamma(\sqrt{u}))<\infty.$$

Coming back to the Caccioppoli inequality, we let $g=\Gamma(\sqrt{u})$, $h=2\Gamma(\sqrt{u},\frac{\Delta u}{2\sqrt{u}})+\frac{ 2}{n}u(\Delta\log \sqrt{u})^2+2K\Gamma(\sqrt{u}),$ from $CDE'(n,K)$, we know that
\[\Delta g\geq h.\]
Let $\{\eta_k\}_0^{\infty}$ be a non-decreasing sequence, then from the Caccioppoli inequality, we have
\begin{equation*}
\begin{aligned}
\left\|\Gamma(g)\eta_k^2\right\|_{l^1_m}
&\leq C\left(\left\|\Gamma(\eta_k)g^2\right\|_{l^1_m}+\left\|gh\eta_k^2\right\|_{l^1_m}\right)\\
&\leq C\left(\frac{1}{k}\left\|g\right\|_{l^2_m}^2+\left\|g\right\|_{l^{\infty}_m}\left\|\Gamma\left(\sqrt{u},\frac{\Delta u}{2\sqrt{u}}\right)\right\|_{l^1_m}+\left\|g\right\|_{l^{\infty}_m}\|u\|_{\ell^\infty_m}\left\|(\Delta\log u)^2\right\|_{l^1_m}+\left|2K\right|\left\|g\right\|_{l^2_m}^2\right).\\
\end{aligned}
\end{equation*}
From Lemma \ref{lem:1} and Lemma \ref{lem:6}, and notice that $g\in l^2(V,m)$ and $g\in l^{\infty}(V,m)$, we have
$$\left\|\Gamma(g)\eta_k^2\right\|_{l^1_m}\leq C.$$
According to Fatou lemma,
$$\left\|\Gamma\left(\Gamma(\sqrt{P_tf+\epsilon})\right)\right\|_{l^1_m}\leq\lim_{k\rightarrow\infty}\inf\left\|\Gamma\left(\Gamma(\sqrt{P_tf+\epsilon})\right)\eta_k^2\right\|_{l^1_m}\leq C.$$
This proves the lemma.
\end{proof}

\section{Li-Yau inequality}
For any $f,\xi\in C_0(V)$, let
$$\Phi(s):=\sum\limits_{x\in V}\Gamma\left(\sqrt{P_{t-s}f+\epsilon}\right)P_s\xi(x)m(x),\quad \epsilon>0.$$

\begin{proposition}
$\Phi(s)$ is differentiable in $s \in (0, t)$, and
\begin{equation}
\begin{aligned}
\Phi'(s)&=-2\sum\limits_{x\in V}\Gamma\left(\sqrt{P_{t-s}f+\epsilon},\frac{\Delta( P_{t-s}f+\epsilon)}{2\sqrt{P_{t-s}f+\epsilon}}\right)(x)P_s\xi(x)m(x)\\
&+\sum\limits_{x\in V}\Gamma(\sqrt{P_{t-s}f+\epsilon})(x)\Delta P_s\xi(x)m(x)
\end{aligned}
\end{equation}
\end{proposition}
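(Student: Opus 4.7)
The plan is to differentiate termwise, reducing the proposition to a pointwise product rule that is then promoted to the level of the sum by dominated convergence.

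First I would establish pointwise differentiability at each fixed $x\in V$. Since $f\in C_0(V)\subset\ell^2_m$ and the graph is locally finite, iteratively applying Proposition \ref{pro:semi} (together with $\Delta f\in C_0(V)$) shows $s\mapsto P_{t-s}f(x)$ is smooth in $s\in(0,t)$ with $\partial_s P_{t-s}f(x)=-\Delta P_{t-s}f(x)$. Writing $u(s,x)=P_{t-s}f(x)+\epsilon\geq\epsilon>0$, the map $s\mapsto\sqrt{u(s,x)}$ is smooth with derivative $-\Delta u(s,x)/(2\sqrt{u(s,x)})$ (the constant $\epsilon$ is killed by the Laplacian). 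Because the graph is locally finite, $\Gamma(\sqrt{u(s)})(x)$ is a finite sum over neighbours of $x$, hence $C^1$ in $s$, and bilinearity of $\Gamma$ gives
$$\frac{d}{ds}\Gamma(\sqrt{u(s)})(x)=2\,\Gamma\!\left(\sqrt{u(s)},\partial_s\sqrt{u(s)}\right)(x)=-2\,\Gamma\!\left(\sqrt{u(s)},\tfrac{\Delta u(s)}{2\sqrt{u(s)}}\right)(x).$$
Combined with $\partial_s P_s\xi(x)=\Delta P_s\xi(x)$, the ordinary product rule yields the claimed pointwise derivative of the summand $\Gamma(\sqrt{u(s)})(x)\,P_s\xi(x)\,m(x)$.

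Next I would justify exchanging $\partial_s$ with $\sum_{x\in V}$ on a compact neighbourhood $I$ of any $s_0\in(0,t)$. Since $\xi,\Delta\xi\in C_0(V)\subset\ell^\infty_m$, Proposition \ref{pro:semi} gives $\|P_s\xi\|_{\ell^\infty_m}\leq\|\xi\|_{\ell^\infty_m}$ and $\|\Delta P_s\xi\|_{\ell^\infty_m}=\|P_s\Delta\xi\|_{\ell^\infty_m}\leq\|\Delta\xi\|_{\ell^\infty_m}$ uniformly in $s$. Consequently the pointwise $s$-derivative of $\Gamma(\sqrt{u(s)})(x)P_s\xi(x)$ is majorised in absolute value by
$$2\|\xi\|_{\ell^\infty_m}\max_{s\in[0,t]}\left|\Gamma\!\left(\sqrt{u(s)},\tfrac{\Delta u(s)}{2\sqrt{u(s)}}\right)\right|\!(x)+\|\Delta\xi\|_{\ell^\infty_m}\max_{s\in[0,t]}\Gamma(\sqrt{u(s)})(x),$$
which lies in $\ell^1_m$ by Lemma \ref{lem:1}. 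A mean value argument on $I$ upgrades this to an $\ell^1_m$ dominant for the difference quotient of the full summand, and dominated convergence then transports the pointwise product rule through the sum, yielding exactly the stated formula.

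The main technical obstacle is precisely this uniform $\ell^1_m$ control of the difference quotients: without it one cannot interchange the time derivative with the infinite sum. This is the reason for introducing the shift $\epsilon>0$, which keeps $\sqrt{u}$ bounded below so that $\Delta u/(2\sqrt{u})$ stays manageable, and the reason Lemma \ref{lem:1} was stated with pointwise maxima over $[0,T]$. Once those two ingredients are in hand, the remaining computation is a routine application of the product rule and dominated convergence.
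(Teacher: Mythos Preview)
Your proposal is correct and follows essentially the same route as the paper: compute the termwise derivative via the product rule, then justify the interchange of $\partial_s$ and $\sum_{x\in V}$ by bounding the formal derivative uniformly in $s$ using $\|P_s\xi\|_{\ell^\infty_m}\le\|\xi\|_{\ell^\infty_m}$, $\|\Delta P_s\xi\|_{\ell^\infty_m}=\|P_s\Delta\xi\|_{\ell^\infty_m}\le\|\Delta\xi\|_{\ell^\infty_m}$, and the $\ell^1_m$ maxima from Lemma~\ref{lem:1}. The paper restricts to $s\in(\tau,t-\tau)$ rather than taking suprema over $[0,t]$, and is terser about the passage from the uniform bound to differentiability of the sum, but the mechanism is the same.
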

\begin{proof}
Without loss of generality, we assume that $s\in (\tau,t-\tau)$ for some $\tau>0$. Taking the formal derivative of $\Phi(s)$ in $s$, we obtain
\[
-2\sum\limits_{x\in V}\Gamma\left(\sqrt{P_{t-s}f+\epsilon},\frac{\Delta( P_{t-s}f+\epsilon)}{2\sqrt{P_{t-s}f+\epsilon}}\right)(x)P_s\xi(x)m(x)+\sum\limits_{x\in V}\Gamma(\sqrt{P_{t-s}f+\epsilon})(x)\Delta P_s\xi(x)m(x).\]
If one can show that the absolute values of summands are uniformly (in s) controlled by summable functions, then this formal derivative is the the derivative of $\Phi(s)$. Note that
\[\|P_s\xi\|_{\ell^\infty}\leq\|\xi\|_{\ell^\infty}<\infty.\]
Combining with the inequality \eqref{eq:GammaSqrtFrac} in Lemma \ref{lem:1}, we have
\begin{equation*}
\begin{aligned}
&2\left|\Gamma\left(\sqrt{P_{t-s}f+\epsilon},\frac{\Delta( P_{t-s}f+\epsilon)}{2\sqrt{P_{t-s}f+\epsilon}}\right)(x)\right|P_s\xi(x)\\
&\leq \sup_{s\in (\tau,t-\tau)}2\left|\Gamma\left(\sqrt{P_{t-s}f+\epsilon},\frac{\Delta( P_{t-s}f+\epsilon)}{2\sqrt{P_{t-s}f+\epsilon}}\right)(x)\right|P_s\xi(x)\\
&\leq 2\|\xi\|_{\ell^\infty}\sup_{s\in (\tau,t-\tau)}\left|\Gamma\left(\sqrt{P_{t-s}f+\epsilon},\frac{\Delta( P_{t-s}f+\epsilon)}{2\sqrt{P_{t-s}f+\epsilon}}\right)(x)\right|=:g(x)\in \ell^1_m.
\end{aligned}
\end{equation*}

In addition, due to the non-degenerate measure $m$, every finitely supported function $\xi$ lies in the domain of Laplacain, and thus $\Delta \xi\in \ell^2_m\subset\ell^\infty_m$. Then we have
$$|\Delta P_s\xi|=|P_s\Delta \xi|\leq\left\|P_s\Delta \xi\right\|_{l^{\infty}_m}\leq\left\|\Delta \xi\right\|_{l^{\infty}_m}<\infty,$$
%moreover from the first item in Lemma \ref{lem:1}, we have
%\[\left|\Gamma(\sqrt{P_{t-s}f+\epsilon})\right|\leq \sup\limits_{s\in(\delta,t-\delta)}\left|\Gamma(\sqrt{P_{t-s}f+\epsilon})\right|:=g(x)\in l_m^1,\]
therefore, the inequality \eqref{eq:GammaSqrt} in Lemma \ref{lem:1} yields that
\[\begin{aligned}
\Gamma(\sqrt{P_{t-s}f+\epsilon})(x)\left|\Delta P_s\xi(x)\right|&\leq\sup_{s\in (\tau,t-\tau)}\Gamma(\sqrt{P_{t-s}f+\epsilon})(x)\left|\Delta P_s\xi(x)\right|\\
&\leq\left\|\Delta f\right\|_{l^{\infty}_m}\sup_{s\in (\tau,t-\tau)}\Gamma(\sqrt{P_{t-s}f+\epsilon})(x)=:h(x)\in\ell^1_m.
\end{aligned}\]
%Due to $|2\sqrt{P_{t-s}f+\epsilon}|\geq2\sqrt{\epsilon}$,  we have
%\begin{equation*}
%\begin{aligned}
%\left|\Gamma(\frac{\Delta (P_{t-s}f+\epsilon)}{2\sqrt{P_{t-s}f+\epsilon}})\right|
%&=\left|\Gamma(\frac{\Delta P_{t-s}f}{2\sqrt{P_{t-s}f+\epsilon}})\right|\\
%&\leq C_1\left|\Gamma(\Delta P_{t-s}f)\right|+C_2\left|\Gamma(\sqrt{P_{t-s}f+\epsilon})\right|,
%\end{aligned}
%\end{equation*}
%where $C_i=C_i(\epsilon, \left\|\Delta f\right\|_{l^{\infty}_m}), i=1,2$.
\end{proof}
Since $P_s\xi\in D(\Delta), \Gamma(\sqrt{P_{t-s}f+\epsilon})\in D(Q)$, by Green formula (see Lemma \ref{lem:green}),
we have
\begin{equation}\label{eq:derive}
\begin{aligned}
\Phi'(s)&=-2\sum\limits_{x\in V}\Gamma\left(\sqrt{P_{t-s}f+\epsilon},\frac{\Delta( P_{t-s}f+\epsilon)}{2\sqrt{P_{t-s}f+\epsilon}}\right)(x)P_s\xi(x)m(x)\\
&-\sum\limits_{x\in V}\Gamma\left(\Gamma(\sqrt{P_tf+\epsilon}),P_s\xi\right)(x)m(x)
\end{aligned}
\end{equation}
\begin{claim} \label{cl:1}
Assume the complete graph $G=(V,E,m,\omega)$ satisfying $CDE'(n,K)$, then for any  $0\leq h\in D(Q)$, we have
\begin{equation}
\begin{aligned}
&-2\sum\limits_{x\in V}\Gamma(\sqrt{P_{t-s}f+\epsilon},\frac{\Delta( P_{t-s}f+\epsilon)}{2\sqrt{P_{t-s}f+\epsilon}})(x)h(x)m(x)\\
&-\sum\limits_{x\in V}\Gamma(\Gamma(\sqrt{P_tf+\epsilon}),h(x))m(x)\\
&\geq \frac{2}{n}\sum\limits_{x\in V}(P_{t-s}f+\epsilon)(\Delta\log\sqrt{P_{t-s}f+\epsilon})^2h(x)m(x)+2K\sum\limits_{x\in V}\Gamma(\sqrt{P_{t-s}f+\epsilon})h(x)m(x)
\end{aligned}
\end{equation}
\end{claim}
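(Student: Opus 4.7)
The plan is to obtain Claim \ref{cl:1} as the integrated form of the pointwise $CDE'(n,K)$ inequality, in its rewritten shape \eqref{eq:CDE'}, tested against the nonnegative weight $h$. Writing $u := P_{t-s}f+\epsilon$, I would multiply both sides of
$$\tfrac{1}{2}\Delta\Gamma(\sqrt{u}) - \Gamma\left(\sqrt{u},\frac{\Delta u}{2\sqrt{u}}\right) \geq \frac{1}{n}u(\Delta\log\sqrt{u})^2 + K\Gamma(\sqrt{u})$$
by $2h(x)m(x)\geq 0$ and sum over $x\in V$. Three of the four resulting sums are unproblematic: by Lemma \ref{lem:1} and Lemma \ref{lem:6}, the functions $\Gamma(\sqrt{u})$, $\Gamma(\sqrt{u},\Delta u/(2\sqrt{u}))$, and $u(\Delta\log\sqrt{u})^2$ all lie in $\ell^1_m$ (using $u\leq\|f\|_{\ell^\infty_m}+\epsilon$ for the last one), while the non-degeneracy of $m$ together with Lemma \ref{lemma1.1} gives $h\in D(Q)\subset \ell^2_m\hookrightarrow \ell^\infty_m$, so each of the three pairings against $h$ converges absolutely.

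\textbf{Main obstacle: the integration by parts.} The one nontrivial piece is the fourth sum, $\sum_x h(x)\,\Delta\Gamma(\sqrt{u})(x)\,m(x)$, which I want to rewrite as $-\sum_x \Gamma(h,\Gamma(\sqrt{u}))\,m$. Green's formula (Lemma \ref{lem:green}) does not apply directly, because Lemma \ref{lem:important} only gives $\Gamma(\sqrt{u})\in D(Q)$, not $\Gamma(\sqrt{u})\in D(\Delta)$. To bypass this I would truncate with the completeness sequence $\{\eta_k\}\subset C_0(V)$ from \eqref{eq:complete}: since $h\eta_k\in C_0(V)$, a direct rearrangement of the finite double sum using only the symmetry $\omega_{xy}=\omega_{yx}$ (and needing no regularity of $\Gamma(\sqrt{u})$ beyond being locally well-defined) yields
$$\sum_x h(x)\eta_k(x)\,\Delta\Gamma(\sqrt{u})(x)\,m(x) \;=\; -\sum_x \Gamma\bigl(h\eta_k,\,\Gamma(\sqrt{u})\bigr)(x)\,m(x).$$

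\textbf{Passage to the limit.} Finally I would send $k\to\infty$. Because $\Gamma(\sqrt{u})\in D(Q)$ and Lemma \ref{lemma1.2} gives $h\eta_k\to h$ in $Q$-norm, continuity of the bilinear form $Q$ produces
$$\sum_x \Gamma\bigl(h\eta_k,\,\Gamma(\sqrt{u})\bigr)\,m \;\longrightarrow\; \sum_x \Gamma\bigl(h,\,\Gamma(\sqrt{u})\bigr)\,m.$$
For the three other sums, monotone convergence handles the nonnegative pieces $\sum_x \Gamma(\sqrt{u})\,h\eta_k\,m$ and $\sum_x u(\Delta\log\sqrt{u})^2\,h\eta_k\,m$ (since $\eta_k\uparrow \mathbf{1}$), while dominated convergence, with $\ell^1_m$ bound $|\Gamma(\sqrt{u},\Delta u/(2\sqrt{u}))|\cdot h$, handles the sign-indefinite cross term. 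Assembling the limits produces exactly the claimed inequality, after noting $\Gamma(h,\Gamma(\sqrt{u}))=\Gamma(\Gamma(\sqrt{u}),h)$ by symmetry of $\Gamma$.
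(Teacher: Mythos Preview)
Your proposal is correct and follows essentially the same route as the paper: both arguments test the pointwise $CDE'(n,K)$ inequality \eqref{eq:CDE'} against a compactly supported nonnegative weight (the paper first treats $h\in C_0(V)$, you write $h\eta_k$ directly), use the finite-sum version of integration by parts to convert $\sum h\eta_k\,\Delta\Gamma(\sqrt{u})\,m$ into $-\sum\Gamma(h\eta_k,\Gamma(\sqrt{u}))\,m$, and then pass to the limit $k\to\infty$ using the $\ell^1_m$ bounds of Lemmas~\ref{lem:1} and~\ref{lem:6}, the fact that $\Gamma(\sqrt{u})\in D(Q)$ from Lemma~\ref{lem:important}, and the $Q$-norm convergence $h\eta_k\to h$ of Lemma~\ref{lemma1.2}. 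The only cosmetic difference is that the paper spells out the limit of the bilinear term via Cauchy--Schwarz, while you invoke continuity of $Q$ on $D(Q)\times D(Q)$---these are equivalent.
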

\begin{proof}
In order to prove the above claim, first we consider $0\leq h\in C_0(V)$.  By Green's formula and the definition of $CDE'(n,K)$,
\begin{equation*}
\left.
\begin{aligned}
&\quad -2\sum\limits_{x\in V}\Gamma(\sqrt{P_{t-s}f+\epsilon},\frac{\Delta( P_{t-s}f+\epsilon)}{\sqrt{P_{t-s}f+\epsilon}})(x)h(x)m(x)\\
&-\sum\limits_{x\in V}\Gamma(\Gamma(\sqrt{P_tf+\epsilon}),h(x))m(x)\\
&=-2\sum\limits_{x\in V}\Gamma(\sqrt{P_{t-s}f+\epsilon},\frac{\Delta( P_{t-s}f+\epsilon)}{\sqrt{P_{t-s}f+\epsilon}})(x)h(x)m(x)\\
&+\sum\limits_{x\in V}\Delta\Gamma(\sqrt{P_{t-s}f+\epsilon})h(x)m(x)\\
&\geq \frac{2}{n}\sum\limits_{x\in V}(P_{t-s}f+\epsilon)(\Delta\log\sqrt{P_{t-s}f+\epsilon})^2h(x)m(x)+2K\sum\limits_{x\in V}\Gamma(\sqrt{P_{t-s}f+\epsilon})h(x)m(x)
\end{aligned}
\right.
\end{equation*}
By definition of completeness, there exists a non-decreasing sequence $\{\eta_k\}_{k=0}^{\infty}$ defined as \eqref{eq:complete} on $G$. Then for any $0\leq h\in D(Q)$, we have $h\eta_k\in C_0(V)$ and when $k\rightarrow\infty$, we have $h\eta_k\rightarrow h$. Replace $h$ by $h\eta_k\in C_0(V)$ into above equality. Since $\Gamma(\sqrt{P_{t-s}f+\epsilon},\frac{\Delta( P_{t-s}f+\epsilon)}{\sqrt{P_{t-s}f+\epsilon}})\in l^1(V,m)$, $\Gamma(\sqrt{P_{t-s}f+\epsilon})\in l^1(V,m)$, $(P_{t-s}f+\epsilon)(\Delta\log\sqrt{P_{t-s}f+\epsilon})^2\in l^1(V,m)$ and $\Gamma(\sqrt{P_{t-s}f+\epsilon})\in D(Q)$, it is obvious that when  $k\rightarrow\infty$, we have
\begin{equation*}
\left.
\begin{aligned}
&\quad -2\sum\limits_{x\in V}\Gamma(\sqrt{P_{t-s}f+\epsilon},\frac{\Delta( P_{t-s}f+\epsilon)}{\sqrt{P_{t-s}f+\epsilon}})(x)h\eta_k(x)m(x)\\
&\rightarrow -2\sum\limits_{x\in V}\Gamma(\sqrt{P_{t-s}f+\epsilon},\frac{\Delta( P_{t-s}f+\epsilon)}{\sqrt{P_{t-s}f+\epsilon}})(x)h(x)m(x),\\
\end{aligned}
\right.
\end{equation*}
and
\begin{equation*}
\left.
\begin{aligned}
&2K\sum\limits_{x\in V}\Gamma(\sqrt{P_{t-s}f+\epsilon})h\eta_k(x)m(x)\\
&\rightarrow2K\sum\limits_{x\in V}\Gamma(\sqrt{P_{t-s}f+\epsilon})h(x)m(x),
\end{aligned}
\right.
\end{equation*}
and
\begin{equation*}
\left.
\begin{aligned}
&\sum\limits_{x\in V}(P_{t-s}f+\epsilon)(\Delta\log\sqrt{P_{t-s}f+\epsilon})^2h\eta_k(x)m(x)\\
&\rightarrow\sum\limits_{x\in V}(P_{t-s}f+\epsilon)(\Delta\log\sqrt{P_{t-s}f+\epsilon})^2h(x)m(x).
\end{aligned}
\right.
\end{equation*}
 Moreover, we have
\begin{equation*}
\begin{aligned}
&\quad\left|\sum\limits_{x\in V}\Gamma(\Gamma(\sqrt{P_tf+\epsilon}),h\eta_k)(x)m(x)-\sum\limits_{x\in V}\Gamma(\Gamma(\sqrt{P_tf+\epsilon}),h)(x)m(x)\right|\\
&=\left|\sum\limits_{x\in V}\Gamma(\Gamma(\sqrt{P_tf+\epsilon}),h\eta_k-h)(x)m(x)\right|\\
&\leq\sum\limits_{x\in V}\sqrt{\Gamma(\Gamma(\sqrt{P_tf+\epsilon})}(x)\sqrt{\Gamma(h(\eta_k-\mathbf{1}))}(x)m(x)\\
&\leq\left(\sum\limits_{x\in V}\Gamma(\Gamma(\sqrt{P_tf+\epsilon})(x)m(x)\right)^{1/2}\left(\sum\limits_{x\in V}\Gamma(h(\eta_k-\mathbf{1}))(x)m(x)\right)^{1/2}\\
&\rightarrow 0.
\end{aligned}
\end{equation*}
We use Lemma \ref{lemma1.2} in the last step. Then we finally conclude that for any $0\leq h\in D(Q)$ the claim holds.
\end{proof}

Since $P_s$ is self-adjoint operator on $\ell_m^2$, and by choosing the delta function, such as $\xi=\delta_y(x) (y\in V)$, then
\[\frac{\Phi(s,x)}{m(x)}=P_s\left(\Gamma(\sqrt{P_{t-s}f+\epsilon})\right)(x):=\phi(s,x),\]
\begin{theorem}\label{th:important}
Let $G=(V,E,m,\omega)$ be a complete graph and $m$ be a non-degenerate measure. If $G$ satisfies $CDE'(n,K)$, then for any smooth positive function $\alpha:[0,t]\rightarrow \mathbb{R}_{>0}$, and non-positive $\gamma:[0,t]\rightarrow \mathbb{R}_{\leq 0}$, for any function $0\leq f\in C_0(V)$, we have
\begin{equation}\label{eq:important1}
(\alpha \phi)'\geq\left(\alpha'+2\alpha K-\frac{4\alpha\gamma}{n}\right)\phi+\frac{2\alpha\gamma}{n}\Delta P_tf
-\frac{2\alpha\gamma^2}{n}(P_tf+\varepsilon)
\end{equation}

\end{theorem}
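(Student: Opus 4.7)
The plan is to start from the identity for $\Phi'(s)$ in \eqref{eq:derive}, apply Claim~\ref{cl:1} with $h=P_s\xi$, and then convert the resulting weak inequality into a pointwise one by taking $\xi=\delta_x$ and dividing by $m(x)$. Self-adjointness of $P_s$ on $\ell^2_m$ turns each of the two sums on the right-hand side into the value at $x$ of an appropriate $P_s(\cdot)$, yielding the pointwise bound
\[\partial_s\phi(s,x) \geq \tfrac{2}{n}P_s\!\bigl((P_{t-s}f+\epsilon)(\Delta\log\sqrt{P_{t-s}f+\epsilon})^2\bigr)(x) + 2K\phi(s,x).\]

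The core of the argument is then the pointwise estimate
\[u_{t-s}(\Delta\log\sqrt{u_{t-s}})^2 \geq \gamma\,\Delta u_{t-s} - 2\gamma\,\Gamma(\sqrt{u_{t-s}}) - \gamma^2 u_{t-s}, \qquad u_\tau:=P_\tau f+\epsilon,\]
which I obtain by combining two ingredients. First, the elementary inequality $\log z\leq z-1$ gives $\Delta\log g\leq \Delta g/g$ applied to $g=\sqrt{u_{t-s}}$, which together with \eqref{eq:solution} yields $u_{t-s}\Delta\log\sqrt{u_{t-s}}\leq \sqrt{u_{t-s}}\,\Delta\sqrt{u_{t-s}}=\tfrac{1}{2}\Delta u_{t-s}-\Gamma(\sqrt{u_{t-s}})$. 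Second, the trivial quadratic bound $a^2\geq 2\gamma a-\gamma^2$ with $a=\Delta\log\sqrt{u_{t-s}}$, multiplied pointwise by $u_{t-s}\geq 0$, gives $u_{t-s}(\Delta\log\sqrt{u_{t-s}})^2\geq 2\gamma u_{t-s}\Delta\log\sqrt{u_{t-s}}-\gamma^2 u_{t-s}$. Here the hypothesis $\gamma\leq 0$ is essential: only then does the upper bound on $u_{t-s}\Delta\log\sqrt{u_{t-s}}$ convert, upon multiplication by $2\gamma$, into a useful \emph{lower} bound, and the two pieces combine to give the displayed estimate.

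Finally, applying the linear and order-preserving semigroup $P_s$ to this pointwise inequality, together with $P_s\Delta=\Delta P_s$ and $P_s u_{t-s}=P_tf+\epsilon$, I obtain
\[P_s\!\bigl(u_{t-s}(\Delta\log\sqrt{u_{t-s}})^2\bigr)(x)\geq \gamma\,\Delta P_tf(x)-2\gamma\,\phi(s,x)-\gamma^2(P_tf(x)+\epsilon).\]
Substituting this into the earlier bound on $\partial_s\phi$ gives $\partial_s\phi \geq (2K-\tfrac{4\gamma}{n})\phi + \tfrac{2\gamma}{n}\Delta P_tf - \tfrac{2\gamma^2}{n}(P_tf+\epsilon)$, whereupon $(\alpha\phi)'=\alpha'\phi+\alpha\,\partial_s\phi$ with $\alpha>0$ immediately yields \eqref{eq:important1}. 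The main obstacle is the first step: identifying the derivative of the pointwise object $\phi(s,x)$ with $\Phi'(s)/m(x)$ and passing from the weak inequality in Claim~\ref{cl:1} (summed against $h=P_s\xi$) to a genuine pointwise one. This passage relies on the membership $\Gamma(\sqrt{u_{t-s}})\in D(Q)$ from Lemma~\ref{lem:important} and on the $\ell^1_m$ summability controls of Lemmas~\ref{lem:1} and~\ref{lem:6}; once these technical facts are in place, the rest of the proof is driven by the single algebraic trick that the sign of $\gamma$ turns the graph-Laplacian chain-rule \emph{inequality} in \eqref{eq:solution} into a lower bound for $u(\Delta\log\sqrt{u})^2$.
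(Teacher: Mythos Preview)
Your proof is correct and follows the same overall architecture as the paper's: derive $\Phi'(s)$ via \eqref{eq:derive}, feed it into Claim~\ref{cl:1} with $h=P_s\xi$, pass to the pointwise quantity $\phi(s,x)$ by taking $\xi=\delta_x$ and using self-adjointness, and then close with the semigroup identities $P_s\Delta u_{t-s}=\Delta P_tf$ and $P_su_{t-s}=P_tf+\epsilon$ (the latter using stochastic completeness, which you invoke implicitly and the paper states explicitly).

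The one genuine difference lies in the algebraic core estimate for $u(\Delta\log\sqrt{u})^2$. The paper splits on the sign of $\Delta\sqrt{u}$: when $\Delta\sqrt{u}<0$ it first passes from $u(\Delta\log\sqrt{u})^2$ down to $(\Delta\sqrt{u})^2$ via $g\Delta\log g\leq\Delta g$, and only then applies the quadratic bound $(\Delta\sqrt{u})^2\geq 2\gamma\sqrt{u}\,\Delta\sqrt{u}-\gamma^2 u$; when $\Delta\sqrt{u}\geq 0$ it observes that the target lower bound is nonpositive (since $\gamma\leq 0$) and hence trivially valid. You instead apply the quadratic bound $a^2\geq 2\gamma a-\gamma^2$ directly with $a=\Delta\log\sqrt{u}$, multiply by $u>0$, and then use $u\,\Delta\log\sqrt{u}\leq \sqrt{u}\,\Delta\sqrt{u}=\tfrac12\Delta u-\Gamma(\sqrt{u})$ together with $2\gamma\leq 0$ to flip the inequality. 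Your route avoids the case distinction entirely and is a bit cleaner; the paper's route has the minor advantage of making visible exactly where the ``$CDE$'' term $(\Delta\sqrt{u})^2$ would appear, which is the form used elsewhere in the literature. Either way the resulting lower bound $u(\Delta\log\sqrt{u})^2\geq \gamma\Delta u-2\gamma\Gamma(\sqrt{u})-\gamma^2 u$ is identical, and the remainder of the argument coincides.
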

\begin{proof}
Since $P_s\xi \in D(Q)$, combining the equation \eqref{eq:derive} with Claim \ref{cl:1}, we then conclude that
\[\Phi'(s)\geq \frac{2}{n}\sum\limits_{x\in V}(P_{t-s}f+\epsilon)(\Delta\log\sqrt{P_{t-s}f+\epsilon})^2(P_s\xi)(x)m(x)+2K\sum\limits_{x\in V}\Gamma(\sqrt{P_{t-s}f+\epsilon})(P_s\xi)(x)m(x).\]
For any $x\in V$, we have
\begin{equation*}
\begin{aligned}
(\alpha\Phi)'(s)
&=\alpha'\Phi(s)+\alpha\Phi'(s)\\
&\geq\left(\alpha'+2\alpha K\right)\Phi(s)+\frac{2\alpha}{n}\sum\limits_{x\in V}(P_{t-s}f+\epsilon)(\Delta\log\sqrt{P_{t-s}f+\epsilon})^2(P_s\xi)(x)m(x)
\end{aligned}
\end{equation*}
Firstly, we separate the second item into two parts. Since for any $x\in V$, and any $0<g\in V^\mathbb{R},$ we have the following simple estimate
$g(x) \Delta \log g(x)\leq \Delta g(x).$
Then if $\Delta g(x)<0$, we have
\[g^2(x) (\Delta \log g)^2(x)\geq (\Delta g)^2(x).\]
Therefore,
\begin{equation*}
\begin{aligned}
&\sum\limits_{x\in V}(P_{t-s}f+\epsilon)(\Delta\log\sqrt{P_{t-s}f+\epsilon})^2(P_s\xi)(x)m(x)\\
&\geq\sum\limits_{x\in V,\Delta \sqrt{P_{t-s}f+\epsilon}(x)<0 }(\Delta\sqrt{P_{t-s}f+\epsilon})^2(P_s\xi)(x)m(x)\\
&+\sum\limits_{x\in V,\Delta \sqrt{P_{t-s}f+\epsilon}(x)\geq0 }(P_{t-s}f+\epsilon)(\Delta\log\sqrt{P_{t-s}f+\epsilon})^2(P_s\xi)(x)m(x).\\
\end{aligned}
\end{equation*}
Moreover, for any function $\gamma$, one has
\[(\Delta\sqrt{P_{t-s}f+\epsilon})^2\geq2\gamma\sqrt{P_{t-s}f+\epsilon}\Delta\sqrt{P_{t-s}f+\epsilon}-\gamma^2(P_{t-s}f+\epsilon),\]
since $\gamma$ is non-positive, if $\Delta \sqrt{P_{t-s}f+\epsilon}(x)\geq0$, it is also true that
\[(P_{t-s}f+\epsilon)(\Delta\log\sqrt{P_{t-s}f+\epsilon})^2\geq2\gamma\sqrt{P_{t-s}f+\epsilon}\Delta\sqrt{P_{t-s}f+\epsilon}-\gamma^2(P_{t-s}f+\epsilon),\]
as the right hand side of this inequality is clearly non-positive.
Furthermore, by \eqref{eq:solution}, one can conclude that
\begin{equation*}
\begin{aligned}
&\sum\limits_{x\in V}(P_{t-s}f+\epsilon)(\Delta\log\sqrt{P_{t-s}f+\epsilon})^2(P_s\xi)(x)m(x)\\
&\geq\sum\limits_{x\in V}\left[\gamma\Delta(P_{t-s}f+\epsilon)-2\gamma\Gamma(\sqrt{P_{t-s}f+\epsilon})-\gamma^2(P_{t-s}f+\epsilon)\right](P_s\xi)(x)m(x).
\end{aligned}
\end{equation*}
Therefore, we have
\begin{equation}\label{eq:important}
\begin{aligned}
(\alpha\Phi)'(s)&\geq\left(\alpha'+2\alpha K-\frac{4\alpha\gamma}{n}\right)\Phi(s)+\frac{2\alpha\gamma}{n}\sum\limits_{x\in V}\Delta(P_{t-s}f+\epsilon)(P_s\xi)(x)m(x)\\
&-\frac{2\alpha\gamma^2}{n}\sum\limits_{x\in V}(P_{t-s}f+\epsilon)(P_s\xi)(x)m(x).
\end{aligned}
\end{equation}
Let $\xi(x)=\delta_y(x)$ in the above inequality, by the self-adjoint property of $P_t$, we obtain
\begin{equation}\label{eq:im}
\begin{aligned}
\frac{d}{ds}(\alpha\phi)(s,y)&\geq\left(\alpha'+2\alpha K-\frac{4\alpha\gamma}{n}\right)\phi(s,y)+\frac{2\alpha\gamma}{n}P_s\Delta(P_{t-s}f+\epsilon)(y)\\
&-\frac{2\alpha\gamma^2}{n}P_s(P_{t-s}f+\epsilon)(y).
\end{aligned}
\end{equation}
According to the semigroup property of heat semigroup, i.e. $P_tP_sf=P_{t+s}f$, as well as the commutative property of $\Delta$ and $P_t$, that is, $\Delta P_tf=P_t\Delta f$, and the stochastic completeness for graphs in these assumption, i.e. $P_t\mathbf{1}=\mathbf{1}$. From \eqref{eq:im},
we can complete the proof.
\end{proof}

If $\alpha$ is chosen appropriately to make $\gamma$ non-positive, and satisfying
\[\alpha'+2\alpha K-\frac{4\alpha\gamma}{n}=0,\]
then we may integrate the inequality \eqref{eq:important1} in Theorem \ref{th:important}, if we denote $W(s)=\sqrt{\alpha}(s)$, then we obtain the following estimate.
\begin{theorem}
Let $G=(V,E,m,\omega)$ be a  complete graph and $m$ be a non-degenerate measure. If $G$ satisfies $CDE'(n,K)$, and $W:[0,t]\rightarrow  \mathbb{R}_{>0}$ be a smooth and positive function such that
\[W(0)=1,W(t)=0,\]
and  for $s\in [0,t]$,
\[W'(s)\leq-KW(s).\]
Then for any $0\leq f\in \ell^p_m$ with $p\in[1,\infty]$, we have
\begin{equation}\label{eq:ly-family}
\begin{aligned}
\frac{\Gamma(\sqrt{P_tf})}{P_tf}&\leq\frac{1}{2}\left(1-2K\int_0^tW(s)^2ds\right)\frac{\Delta P_tf}{P_tf}\\
&+\frac{n}{2}\left(\int_0^tW'(s)^2ds+K^2\int_0^tW(s)^2ds-K\right).
\end{aligned}
\end{equation}
\end{theorem}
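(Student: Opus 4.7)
The plan is to feed the hypotheses on $W$ into Theorem \ref{th:important} by setting $\alpha(s) = W(s)^2$ and choosing $\gamma$ so as to kill the $\phi$-coefficient in \eqref{eq:important1}. Solving $\alpha'+2\alpha K - 4\alpha\gamma/n = 0$ forces
\[
\gamma(s) = \frac{n\bigl(W'(s) + KW(s)\bigr)}{2W(s)},
\]
and the hypothesis $W' \leq -KW$ together with $W>0$ on $[0,t)$ makes $\gamma$ non-positive, so the previous theorem applies. Moreover the products $\alpha\gamma = \tfrac{n}{2}(WW' + KW^2)$ and $\alpha\gamma^2 = \tfrac{n^2}{4}(W'+KW)^2$ remain bounded as $s \to t^-$, even though $W(t) = 0$.

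With this choice \eqref{eq:important1} reduces to
\[
(\alpha\phi)'(s,y) \geq \frac{2\alpha(s)\gamma(s)}{n}\,\Delta P_tf(y) - \frac{2\alpha(s)\gamma(s)^2}{n}\bigl(P_tf(y)+\epsilon\bigr),
\]
in which $\Delta P_tf(y)$ and $P_tf(y)$ are constants in $s$. Integrating over $[0,t]$ telescopes the left side to $\alpha(t)\phi(t,y) - \alpha(0)\phi(0,y) = -\Gamma(\sqrt{P_tf+\epsilon})(y)$ by the boundary conditions $W(0)=1$, $W(t)=0$. The two $s$-integrals are then elementary: using $\int_0^t WW'\,ds = -1/2$,
\[
\int_0^t \alpha\gamma\,ds = -\tfrac{n}{4} + \tfrac{nK}{2}\!\int_0^t W^2\,ds, \qquad \int_0^t \alpha\gamma^2\,ds = \tfrac{n^2}{4}\!\left(\int_0^t (W')^2\,ds - K + K^2\!\int_0^t W^2\,ds\right).
\]
Substituting these in, dividing by $P_tf$, and letting $\epsilon \to 0^+$ recovers exactly \eqref{eq:ly-family} for $0\leq f \in C_0(V)$. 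The limit $\epsilon \to 0$ is legitimate since $P_tf>0$ on the connected graph whenever $f\not\equiv 0$, and $\Gamma(\sqrt{\,\cdot\,})$ is pointwise continuous in its argument at a fixed vertex.

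For general $0 \leq f \in \ell^p_m$, I would approximate $f$ by an increasing sequence $f_k \in C_0(V)$ with $f_k \nearrow f$ pointwise, so that $P_tf_k \nearrow P_tf$ by heat-semigroup monotonicity on nonnegative data and, by local finiteness, $\Delta P_t f_k \to \Delta P_t f$ and $\Gamma(\sqrt{P_t f_k}) \to \Gamma(\sqrt{P_t f})$ pointwise. Passing to the limit in the inequality for $f_k$ yields \eqref{eq:ly-family}. The main obstacle is justifying the integration step rigorously: one must verify that $s \mapsto (\alpha\phi)(s,y)$ is continuous up to both endpoints and that $\phi(t,y) = P_t\Gamma(\sqrt{f+\epsilon})(y)$ is finite so that $\alpha(t)\phi(t,y)$ genuinely vanishes rather than being an indeterminate product $0 \cdot \infty$. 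The uniform estimates in Lemma \ref{lem:1}, already exploited in the derivation of Theorem \ref{th:important}, supply precisely the bounds needed, but one must apply them explicitly at the endpoints.
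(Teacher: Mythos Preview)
Your proposal is correct and follows essentially the same route as the paper: set $\alpha=W^2$, choose $\gamma$ to annihilate the $\phi$-coefficient in \eqref{eq:important1}, integrate over $[0,t]$, let $\epsilon\to0$, and then pass from $C_0(V)$ to $\ell^p_m$ by monotone approximation (the paper uses $f\eta_k$ specifically, which is exactly your increasing sequence). Your explicit computation of $\int\alpha\gamma$ and $\int\alpha\gamma^2$ and your remark that $\alpha\gamma$, $\alpha\gamma^2$ stay bounded near $s=t$ are details the paper leaves implicit; the endpoint concern about $\alpha(t)\phi(t,y)$ is harmless since for $f\in C_0(V)$ the function $\Gamma(\sqrt{f+\epsilon})$ is finitely supported and hence $\phi(t,y)$ is finite.
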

\begin{proof}By integrating the inequality \eqref{eq:important1} in Theorem \ref{th:important}, we obtain for any $0\leq f\in C_0(V)$,
\begin{equation*}
\begin{aligned}
\Gamma(\sqrt{P_tf+\epsilon})&\leq\frac{1}{2}\left(1-2K\int_0^tW(s)^2ds\right)\Delta P_tf\\
&+\frac{n}{2}\left(\int_0^tW'(s)^2ds+K^2\int_0^tW(s)^2ds-K\right)(P_tf+\epsilon).
\end{aligned}
\end{equation*}
Notice that G is locally finite, then
\[\lim_{\epsilon\rightarrow 0}\Gamma(\sqrt{P_tf+\epsilon})=\Gamma(\sqrt{P_tf})\]
That means for any $0\leq f\in C_0(V)$, we have
\begin{equation}\label{eq:ly-finite}
\begin{split}
\Gamma(\sqrt{P_tf})&\leq\frac{1}{2}\left(1-2K\int_0^tW(s)^2ds\right)\Delta P_tf\\
&+\frac{n}{2}\left(\int_0^tW'(s)^2ds+K^2\int_0^tW(s)^2ds-K\right)(P_tf).
\end{split}
\end{equation}
For any $0\leq f\in \ell^p_m$ with $p\in[1,\infty]$, consider the sequence $\{f\eta_k\}_{k=0}^{\infty}$ while $\{\eta_k\}_{k=0}^{\infty}$ is the monotonically non-decreasing sequence about $k$ defined as \eqref{eq:complete}. It is obvious that $0\leq\eta_k\in C_0(V)$. Moreover, $0\leq f\eta_k\in C_0(V)$, and $f\eta_k\rightarrow f$ pointwise when $k\rightarrow\infty$ .

Applying \eqref{eq:ly-finite} to $f\eta_k$, we have
\begin{equation}\label{eq:ly-family3}
\begin{split}
\Gamma\left(\sqrt{P_t(f\eta_k)}\right)&\leq\frac{1}{2}\left(1-2K\int_0^tW(s)^2ds\right)\Delta P_t(f\eta_k)\\
&+\frac{n}{2}\left(\int_0^tW'(s)^2ds+K^2\int_0^tW(s)^2ds-K\right)P_t(f\eta_k).
\end{split}
\end{equation}
By monotone convergence theorem, $P_t(f\eta_k)\rightarrow P_tf$ pointwise when $k\rightarrow\infty$. Noting that $G$ is locally finite, we have
$$\lim\limits_{k\rightarrow\infty}\Gamma\left(\sqrt{P_t(f\eta_k)}\right)=\Gamma\left(\lim\limits_{k\rightarrow\infty}\sqrt{P_t(f\eta_k)}\right)=\Gamma(\sqrt{P_tf}),$$
and
$$\lim\limits_{k\rightarrow\infty}\Delta\left(P_t(f\eta_k)\right)=\Delta\left(\lim\limits_{k\rightarrow\infty}P_t(f\eta_k)\right)=\Delta(P_tf),$$
Therefore, let $k\rightarrow\infty$ in the both side of the inequality \eqref{eq:ly-family3}, we obtain what we desire.
\end{proof}
\begin{proof}[Proof of Theorem \ref{th:ly-family}]
As observed in \cite{BG09}, we choose that
\[W(s)=\left(1-\frac{s}{t}\right)^b,\]
for any $b >\frac{1}{2}$ is quite interesting in the regime where $-\frac{b}{t}<K$.
For this family
\[\int_0^tW(s)^2ds=\frac{t}{2b+1},\]
and
\[\int_0^tW'(s)^2ds=\frac{b^2}{(2b-1)t}.\]
Thus, for such a choice of $W$, the estimate \eqref{eq:ly-family} yields the family of Li-Yau inequalities \eqref{eq:ly-family2}. That finishes our proof.
%\begin{equation*}
%\frac{\Gamma(\sqrt{P_tf})}{P_tf}\leq\frac{1}{2}\left(1-\frac{2Kt}{2b+1}\right)\frac{\Delta P_tf}{P_tf}+\frac{n}{2}\left(\frac{b^2}{(2b-1)t}+\frac{K^2 t}{2b+1}-K\right).
%\end{equation*}
\end{proof}

Next, we prove applications from Li-Yau inequality as we mentioned before.

\begin{proof}[Proof of Theorem \ref{th:Gauss-upper}]
For any $y\in V$, we let $f=\delta_y$ in \eqref{eq:hanack}. For any $t>0$, choosing $s=2t$ and let $z\in B(x,\sqrt{t})$, we have
\[p(t,x,y)
\leq p(2t,z,y)2^{n}\exp\left(\frac{4m_{max}}{\omega_{min}}\right).\]
Integrating the above inequality over $B(x,\sqrt{t})$ with respect to $z$, gives
\[
\begin{split}
p(t,x,y)
&\leq \frac{C}{V(x,\sqrt{t})}\sum_{z\in B(x,\sqrt{t})}m(z)p(2t,z,y)\\
&\leq \frac{C}{V(x,\sqrt{t})}.
\end{split}
\]
Where $C=2^{n}\exp\left(\frac{4m_{max}}{\omega_{min}}\right)$, and we also use the fact that $P_t\mathbf{1}=\mathbf{1}$.
\end{proof}
%\begin{proof}[Proof of Theorem \ref{th:Liouville}]
%Let $f\in C(V)$ satisfying the equation $\Delta f=0$, by the harmonicity of $f$, on has $P_tf=f, f\in\ell^\infty_m.$ For any time-independent positive harmonic function on $G$, the Li-Yau gradient inequality \eqref{eq:ly} implies the Liouville theorem by letting $t\rightarrow \infty$. The second part follows from the first one by considering the positive function
%\[v(x) = f(x)-\inf_{x\in V} f+\varepsilon\]
%with some $\varepsilon>0$.
%\end{proof}
\begin{proof}[Proof of Theorem \ref{th:Cheng}]
By a result of S. Haeseler and M. Keller ([9], Theorem 3.1), if $\lambda\leq \lambda^*$, then there would be a positive solution $u$ to $\Delta u = -\lambda u.$
Moreover, for positive time-independent solutions to the equation $\Delta f_0 =-\lambda f_0$ with $f_0\in\ell^2_m\subset\ell^\infty_m$, then we have
 $\Delta P_t f_0= P_t\Delta f_0=-\lambda P_t f_0$,
and the Li-Yau inequality \eqref{eq:ly-family2} reduces to
\[
\frac{\Gamma(\sqrt{P_t f_0})}{P_t f_0}+\frac{1}{2}\left(1+\frac{2Kt}{2b+1}\right)\frac{\lambda P_t f_0}{P_t f_0}\leq\frac{n}{2}\left(\frac{b^2}{(2b-1)t}+\frac{K^2 t}{2b+1}+K\right).
\]
Noting that $\frac{\Gamma(\sqrt{P_t f_0})}{P_t f_0}\geq 0$, dividing $t$ on the both sides and taking the limit $t\rightarrow \infty,$ we conclude that
\[\lambda \leq \frac{Kn}{2}.\]
Since $\lambda$ is arbitrary, we can finish the proof.
\end{proof}

 {\bf Acknowledgements.} Y. Lin is supported by the National Science Foundation of China (Grant No.11271011), Shing-Tung Yau is supported by the NSF DMS-0804454. We thank Melchior Wirth for many valuable comments on the paper. Part of the work was done when Y. Lin visited S.-T. Yau in CMSA of Harvard University
in 2018.

\bibliographystyle{amsalpha}

\begin{thebibliography}{A}

\bibitem{BE85} D. Bakry and M. Emery, Diffusions hypercontractives. (French) [Hypercontractive diffusions]
Seminaire de probabilite, XIX, 1983/84, 177-206, Lecture Notes in Math., 1123, Springer,
Berlin, 1985.

\bibitem{BG09} F. Baudoin, N. Garofalo, Perelman's entropy and doubling property on Riemanmmian manifolds, Journal of Geometric Analysis, 2009, 1119-1131.

\bibitem{BGL14} D. Bakry, I. Gentil, and M. Ledoux. Analysis and geometry of Markov
diffusion operators. Number 348 in Grundlehren der Mathematischen Wissenschaften. Springer, Cham, 2014.

\bibitem{BHLLMY15} F. Bauer, P. Horn, Y. Lin, G. Lippner, D. Mangoubi, S-T Yau,  Li-Yau inequality on graphs, J. of Diff. Geom. 99, No. 3, 359-405, 2015.

\bibitem{BHY14} F. Bauer, B. Hua, S. T. Yau, Davies-Gaffney-Grior'yan lemma on graphs, arXiv:1402.3457v2.

\bibitem{C75} S. Y. Cheng, Eigenvalue comparision theorems and its geometric applications, Math Z 143 (1975) 289-297.

\bibitem{Ch97} Fan R. K. Chung, Spectral graph theory, CBMS regional conference series in Math., No. 92, 1997, Amer. Math. Society.

\bibitem{FOT11} M. Fukushima, Y. Oshima,  M. Takeda, Dirichlet forms and symmetric
Markov processes, volume 19 of de Gruyter Studies in Mathematics, Walter
de Gruyter and Co., 2011.
\bibitem{GL} C. Gong, Y. Lin, $CDE'$ inequality on graph with unbounded Laplacian, preprint.

\bibitem{GL17} C. Gong, Y. Lin, Equivalent properties for CD inequality on graph
with Unbounded Laplacians, Chinese Annals of Mathematics, Series B, Volume
38, Issue 5, 1059-1070, 2017

\bibitem{H} B. Hua, Liouville theorem for bounded harmonic functions on graphs satisfying non-negative curvature dimension condition, arXiv:1702.00961v1.

\bibitem{HL17} B. Hua, Y. Lin, Stochastic completeness for graphs, Advances in
Mathematics, Volume 306, 279-302, 2017.

\bibitem{HLLY17} Pual Horn, Yong Lin, Shuang Liu, Shing-Tung Yau, Volume
doubling, Poincar\'e inequality and Guassian heat kernel estimate
for nonnegative curvature graphs,
Journal f\"{u}r die reine und
angewandte Mathematik(Crelle¡¯s journal),
DOI: 10.1515/crelle-2017-0038.

\bibitem{HK11} S. Haeseler, M. Keller, Generalized solutions and spectrum for Dirichlet forms on graphs, Random walks, Boundaries
and spectra, Progress in probability 64 (2011) 181-201.

\bibitem{LL15} Y. Lin, S. Liu. Equivalent Properties of CD Inequality on Graph, arXiv:1512.02677v1.
\bibitem{LL16}G. Lippner, S. Liu, Li-Yau inequality on virtually Abelian groups, arXiv:1610.05227.
\bibitem{LLY16} Y. Lin, S. Liu, Y. Yang, Global gradient estimate on graph and its applications, Acta Mathe-matica Sinica, English Series, Volume 32, Issue 11, 1350-1356, 2016.

\bibitem{LLY17} Y. Lin, S. Liu, Y. Yang, A gradient estimate for positive functions on graphs, The Journal of Geometric Analysis , Volume 27, Issue 2, 1667-1679, 2017.
\bibitem{LMP16} S. P. Liu, F. Munch, N. Peyerimhoff, Bakry-Emery curvature and diameter
bounds on graphs, arXiv:1608.07778.


\bibitem{LY86} P. Li, S. T. Yau, On the parabolic kernel of the Schr\"odinger operator, Acta Math. 156 (1986) 153-201.

\bibitem{LY10} Y. Lin, S. T. Yau, Ricci curvature and eigenvalue estimate on locally finite graphs,
Math. Res. Lett. 17 (2010) 343-356.

\bibitem{M14} F. M\"{u}nch, Li-Yau inequality on finite graphs via non-linear curvature
dimension conditions, arXiv:1412.3340v1.

\bibitem{M15}
F. M\"{u}nch, Remarks on curvature dimension conditions on graphs, arXiv:1501.05839v1.

\bibitem{KL10} M. Keller and D. Lenz. Unbounded Laplacians on graphs: basic spectral
properties and the heat equation. Math. Model. Nat. Phenom., 5(4):198-224,
2010.

\bibitem{KL12} M. Keller and D. Lenz. Dirichlet forms and stochastic completeness of graphs
and subgraphs. J. Reine Angew. Math., 666:189-223, 2012.

\bibitem{KLW13} M. Keller, D. Lenz, and R. K. Wojciechowski. Volume growth, spectrum and
stochastic completeness of infinite graphs. Math. Z., 274(3):905-932, 2013.


\bibitem{Q17} B.Qian, Remarks on Li-Yau inequality on graphs, Journal of Mathematical Analysis and Applications, 456(2):882-902, 2017.
\end{thebibliography}

Chao Gong,\\
Department of Mathematics, Renmin University of China, Beijing, China\\
\textsf{elfmetal@ruc.edu.cn}\\
Yong Lin,\\
Department of Mathematics, Renmin University of China, Beijing, China\\
\textsf{linyong01@ruc.edu.cn}\\
Shuang Liu,\\
Yau Mathematical Sciences Center, Tsinghua University, Beijing, China\\
\textsf{shuangliu@mail.tsinghua.edu.cn}\\
Shing-Tung Yau,\\
Department of Mathematics, Harvard University, Cambridge, Massachusetts, USA\\
\textsf{yau@math.harvard.edu}\\

\end{document}